\newcommand{\xqedhere}[1]{%
    \rlap{%
         \hbox to#1{%
           \hfil
           \llap{%
               \ensuremath{\square}
           }%
       }%
   }%
}
\def\pasdegrille{\let\grille = \pasgrille}
\def\aat#1#2#3{
\divide \dimen1 by 48 \dimen3=\dimen1 \multiply \dimen1 by #1
\advance \dimen1 by -\dimen3 \divide \dimen1 by 101 \multiply
\dimen1 by 100 \divide \dimen2 by \count11 \multiply \dimen2 by #2
\setbox0=\hbox{#3}\ht0=0pt\dp0=0pt
  \rlap{\kern\dimen1 \vbox to0pt{\kern-\dimen2\box0\vss}}\dimen1= \wd1
\dimen2=\ht1}
\def\pasgrille{
\count12= \dimen1 \divide \count12 by 50 \divide \dimen2 by
\count12 \count11 =\dimen2 \ \divide \dimen1 by 48
\setlength{\unitlength}{\dimen1} \smash{\rlap{\ }} \dimen1= \wd1
\dimen2=\ht1 }
\def\grille{
\count12= \dimen1 \divide \count12 by 50 \divide \dimen2 by
\count12 \count11 =\dimen2 \ \divide \dimen1 by 48
\setlength{\unitlength}{\dimen1}
\smash{\rlap{\graphpaper[1](0,0)(50, \count11)}} \dimen1= \wd1
\dimen2=\ht1 }
\newcommand{\be}{\begin{equation}}
\newcommand{\ee}{\end{equation}}
\renewcommand{\Re}{\mathop{\rm Re}\nolimits}
\renewcommand{\Im}{\mathop{\rm Im}\nolimits}
\newcommand{\nn}{\nonumber} 
\theoremstyle{plain}
\newtheorem{thm}{Theorem}
\newtheorem{prop}{Proposition}[section]
\newtheorem{cor}[prop]{Corollary}
\newtheorem{lem}[prop]{Lemma}
\theoremstyle{definition}
\numberwithin{equation}{section}
\def\squarebox#1{\hbox to #1{\hfill\vbox to #1{\vfill}}}
\title[Semi-classical localization of the Schrödinger Resolvent]{Semi-Classical Localization of the Schrödinger Resolvent on Closed Riemann Surfaces}
\author[S.~Campagne]{Sébastien Campagne}
\address{Universit{\'e} Paris-Saclay, Math{\'e}matiques, UMR 8628 du CNRS, B{\^a}t 307, 91405  Orsay Cedex, France,   and Institut Universitaire de France}
\email{Sebastien.campagne@universite-paris-saclay.fr}
\date{\today}
\def\11{{\rm 1~\hspace{-1.4ex}l} }
\def\R{\mathbb R}
\begin{document}    

\begin{abstract}
This paper investigates the localization properties of solutions to the semi-classical Schr\"odinger equation on closed Riemann surfaces.
Unlike classical studies that assume a smooth potential, our work addresses the challenges arising from irregular potentials, specifically those that are merely bounded.
We employ a regularization technique to manage the potential's lack of smoothness and establish a local-to-global estimate.
This result provides a quantitative measure of how the local regularity of the potential influences the global concentration of the solution, thereby bridging the gap between smooth and non-continuous regimes in semi-classical analysis.
\begin{center} \rule{0.6\textwidth}{.4pt} \end{center}
\end{abstract}

\ \vskip -1cm \noindent\hfil\rule{0.9\textwidth}{.4pt}\hfil \vskip 1cm 
\maketitle

\section{Introduction}
In this paper, we study the localization properties of solutions \( u \in H^2 \) to the semi-classical Schr\"odinger equation:
\begin{equation}
    (P_V - E)u := (-h^2\Delta + V - E)u = f, 
\end{equation}
where \( V \) is a real-valued, bounded potential defined on a closed Riemann surface \( M \), \( E \in I \subset \mathbb{R} \) is an energy level varying in a compact set \( I \), and \( f \in L^2(M) \) is a source term. 

It is well established that when \( V \) is smooth, one can derive 
the following local-to-global estimate: for any open subset \( U \subset M \) and for sufficiently small \( h \), there exists a constant \( C > 0 \) such that
\begin{equation}
    \int_M |u|^2 + |h\nabla u|^2 \leq C e^{C/h} \left( \int_U |u|^2 + |h\nabla u|^2 + \int_M |f|^2 \right).
\end{equation}
This estimate relies on techniques from microlocal analysis, particularly semiclassical Carleman estimates. Specifically, for \( R > 0 \) and \( E \in I \), there exists \( C > 0 \) such that for all sufficiently small \( h \) and all \( u \in \mathcal{C}_0^\infty(B(0, R)) \),
\begin{equation}
    \int_{B(0, R)} |u|^2 + |h\nabla u|^2 \leq C e^{C/h} \int_{B(0, R)} |(P_V - E)u|^2,
\end{equation}
as proved by \cite{dyatlovMathematicalTheoryScattering2019} (see, e.g., Theorem 2.32). 

In the present work, the potential \( V \) is not assumed to be smooth, precluding 
the direct application of these classical results. We therefore develop alternative methods to address these challenges. Recent studies have shown that Carleman-type estimates can be obtained under weaker regularity assumptions on \( V \). For instance, \cite{kloppSemiclassicalResolventEstimates2019a} established a semiclassical Carleman estimate for potentials \( V \in L^\infty(\mathbb{R}^2, \mathbb{R}) \): for any \( R > 0 \) and \( E \in I \), there exists \( C > 0 \) such that for sufficiently small \( h \) and \( u \in \mathcal{C}_0^\infty(B(0, R)) \),
\begin{equation}
    \int_{B(0, R)} |u|^2 + |h\nabla u|^2 \leq C e^{C/h^{4/3}} \int_{B(0, R)} |(P_V - E)u|^2.
\end{equation}

Subsequently, 
\cite{vodevSemiclassicalResolventEstimates2020} refined this estimate by incorporating the H\"older regularity of \( V \): for an \( \alpha \)-H\"older potential \( V \), it was shown that
\begin{equation}
    \int_{B(0, R)} |u|^2 + |h\nabla u|^2 \leq C e^{C h^{-4/(\alpha+1)}} \int_{B(0, R)} |(P_V - E)u|^2.
\end{equation}

Inspired by Vodev's approach, we establish the following result on closed Riemann surfaces as opposed to the traditional Euclidean plan or non-compact hyperbolic surfaces:
\begin{thm}
    \label{thm: mainthm}
    Let \( M \) be a closed Riemann surface, and let \( U \subset M \) be an open subset. Let \( E \in I \subset \mathbb{R} \) and \( V \in L^\infty(M, \mathbb{R}) \). Then there exist constants \( C > 0 \) and \( h_0 > 0 \) such that for all \( 0 < h < h_0 \) and \( u \in H^2(M) \),
    \begin{equation}
        \int_M |u|^2 + |h\nabla u|^2 \leq C e^{C\beta(h)} \left( \int_U |u|^2 + |h\nabla u|^2 + \int_M |(P_V - E)u|^2 \right),
    \end{equation}
    where
    \begin{equation}
        \beta(h) = \frac{1}{h^{4/3}} \sup_{x_0 \in M} \sup_{x \in B(x_0, h^{2/3} \kappa)} |V(x) - V(x_0)|^{1/2},
    \end{equation}
    and \( \kappa > 0 \) is a fixed small constant.
\end{thm}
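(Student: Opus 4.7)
The plan is to reduce the global estimate to a local semiclassical Carleman estimate on geodesic balls of fixed small radius, and then propagate it across $M$ by a finite chain-of-balls argument. The first task is to establish, in normal coordinates around any $x_0 \in M$, that for some $R>0$ smaller than the injectivity radius of $M$ and uniform in $x_0$, and for every $u \in \CIc(B(0,R))$,
\begin{equation*}
\int_{B(0,R)} |u|^2 + |h\nabla u|^2 \leq C e^{C\beta(h)} \int_{B(0,R)} |(P_V - E)u|^2.
\end{equation*}
In normal coordinates, $P_V$ is a variable-coefficient second-order elliptic operator with smooth coefficients for the principal part plus the merely bounded term $V$, so the local estimate amounts to a variable-coefficient perturbation of the flat-space results of Klopp--Vodev and Vodev.

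To prove the local estimate I would follow Vodev's strategy, regularizing the potential at the critical scale $\varepsilon = h^{2/3}\kappa$. Write $V = V_\varepsilon + r_\varepsilon$, where $V_\varepsilon = V \ast \rho_\varepsilon$ is a standard mollification; this gives $\norm{r_\varepsilon}_{L^\infty} \leq \omega(h)$ with $\omega(h) = \sup_{x_0} \sup_{x \in B(x_0,\varepsilon)} |V(x)-V(x_0)|$, and $\norm{\partial^\alpha V_\varepsilon}_{L^\infty} \leq C_\alpha \norm{V}_{L^\infty} \varepsilon^{-|\alpha|}$. Splitting $(P_V-E)u = (P_{V_\varepsilon}-E)u + r_\varepsilon u$, I would apply a Carleman estimate with convex weight to the smooth operator $P_{V_\varepsilon}$, tracking how the constants depend on derivatives of $V_\varepsilon$. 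The scale $\varepsilon = h^{2/3}\kappa$ is tuned so that the resulting exponent is $C\omega(h)^{1/2}/h^{4/3} = C\beta(h)$, matching the function appearing in the theorem, while the error term $r_\varepsilon u$ is absorbed into the left-hand side provided $\kappa$ is taken small enough so that $\omega(h)^{1/2}$ is small. This is the main technical obstacle: balancing the blow-up $\norm{\partial^\alpha V_\varepsilon}_{L^\infty} = O(\varepsilon^{-|\alpha|})$ as $\varepsilon \to 0$ against the size of $\norm{r_\varepsilon}_{L^\infty}$ in the Carleman weight, so that the final exponential factor is precisely $e^{C\beta(h)}$ rather than something worse.

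The globalization step is then standard and relies on compactness of $M$. I cover $M$ by finitely many coordinate balls $B_1, \ldots, B_N$ of radius $R$, chosen so that $B_1 \subset U$ and every $B_j$ can be reached from $B_1$ by a chain of balls with sufficient consecutive overlap, with $N$ depending only on $M$ and $U$. Starting from the $L^2$ and $H^1$ control of $u$ on $U$, I apply the local Carleman estimate iteratively after multiplying $u$ by cutoffs $\chi_j$ supported in slightly enlarged balls and equal to $1$ on the next ball in the chain. The commutators $[P_V,\chi_j]$ produced by the cutoffs involve only up to one derivative of $u$ and are supported in the overlap region, so they are absorbed by the $|h\nabla u|^2$ terms coming from the previous step. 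Since $N$ is finite and independent of $h$, the exponential factor $e^{C\beta(h)}$ compounds at most $N$ times, yielding the claimed global estimate with constant $e^{C'\beta(h)}$; the source term $f = (P_V-E)u$ contributes $\norm{f}_{L^2(M)}^2$ throughout the chain.
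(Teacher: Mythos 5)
Your proposal and the paper share the same local-in-space engine, namely a Vodev-type semiclassical Carleman estimate obtained by mollifying $V$ at the critical scale $\varepsilon = h^{2/3}\kappa$ and balancing $\norm{\partial V_\varepsilon}_{L^\infty}\sim \omega(\varepsilon)/\varepsilon$ against the remainder $\norm{r_\varepsilon}_{L^\infty}\leq\omega(\varepsilon)$ so that the exponent comes out to $\omega(h^{2/3}\kappa)^{1/2}/h^{4/3}=\beta(h)$. Where you diverge is the globalization. The paper does not chain balls: it invokes the Poincar\'e uniformization theorem to write $M$ as a compact quotient of $\R^2$, $S^2$, or $\H^2$, lifts $u$, $V$, $f$ to periodic functions on a single ball $B(0,R_0)$ in the cover (after a conformal change of the Laplacian, harmless in dimension two), and then runs exactly one Carleman estimate there (two subcases, depending on whether the lift of $U$ is a central ball or a boundary annulus). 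Periodicity of $u_{\mathrm{per}}$ is then used to reduce the outer commutator integral $\int_{A(0,R_2,R_0)}$ back to $\int_{B(0,R_2)}$, so that it can be absorbed, and the integral over $B(0,R_0)$ comes back to $\int_M$. This buys a genuinely global (two-patch) Carleman weight and avoids iterating an exponential constant; in exchange it is tied to the surface case. Your chain-of-balls route is more elementary and dimension-independent, which is appealing, and it is certainly standard in quantitative unique continuation, but it needs more care than your writeup supplies.

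The concrete gap is in the propagation step. The local estimate you display,
\begin{equation*}
\int_{B(0,R)} |u|^2 + |h\nabla u|^2 \leq C e^{C\beta(h)} \int_{B(0,R)} |(P_V - E)u|^2 ,
\qquad u\in\CIc(B(0,R)),
\end{equation*}
is an \emph{un}weighted consequence of the Carleman inequality and does not by itself propagate from ball to ball. When you apply it to $\chi_j u$, the commutator $[P_V,\chi_j]u$ lives on all of $\operatorname{supp}\nabla\chi_j$, and only part of that set is covered by the previously controlled region; the remaining ``far-side'' part would feed back a term of the same size as the left-hand side and the iteration would close on itself. What actually makes propagation work is the \emph{weighted} inequality together with a weight $\phi$ whose sublevel geometry is adapted to $\chi_j$: $\phi$ must be strictly smaller on the far-side piece of $\operatorname{supp}\nabla\chi_j$ than on the region you try to estimate, so that the corresponding commutator contribution carries an extra $e^{-c/\tilde h}$ and can be absorbed, while the near-side piece (in the previously controlled ball) is kept as the observation term. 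You hint at this with ``convex weight'' and ``absorbed by the previous step,'' but you never state the monotonicity requirement on $\phi$ nor explain how to arrange it consistently along the whole chain, and this is precisely the nontrivial part. Relatedly, when the estimation region is a full ball, a weight centered at the ball's centre (e.g.\ a radial or convex function of distance to that centre) has a critical point there and the H\"ormander subellipticity bracket degenerates exactly on the characteristic set $p_\phi=0$; the paper spends a figure and the two-patch interpolation $e^{2\phi/h}=e^{2\phi_{x_1}/h}\chi_1^2+e^{2\phi_{x_2}/h}\chi_2^2$ specifically to circumvent this. Your proposal should either place the weight's base point outside each ball or adopt a similar interpolation; as written the issue is silently skipped.

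In short: the local analysis is the right one and matches the paper, and the chain-of-balls globalization is a legitimate alternative to uniformization, but to be a proof it has to be carried out with the weighted Carleman inequality and an explicit, chain-compatible weight construction that addresses both the absorption of far-side commutators and the critical-point degeneracy; those steps are currently missing.
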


The function \( \beta(h) \) quantifies the local regularity of the potential \( V \). When \( V \) is \( \alpha \)-H\"older continuous, we have
\[
\beta(h) \leq C_0 h^{2\alpha/3} \kappa^{\alpha},
\]
and thus,
\begin{equation}
    \int_M |u|^2 + |h\nabla u|^2 \leq C e^{C h^{\frac{\alpha - 4}{3}}} \left( \int_U |u|^2 + |h\nabla u|^2 + \int_M |(P_V - E)u|^2 \right). 
\end{equation}

This recovers the classical smooth case when \( V \) is Lipschitz, although our estimate is slightly weaker than Vodev's. Nevertheless, it also recovers the estimate of Klopp and Vogel for merely bounded and non-continuous potentials \( V \). 

Our proof follows the general strategy of \cite{vodevSemiclassicalResolventEstimates2020}, adapted to the geometry of closed Riemann surfaces and the lower regularity of the potential \( V \).

\section{Proof of theorem \ref{thm: mainthm}}
\subsection{Reduction of the problem}
\label{subsect: Uniformization}
Let \( M \) be a closed Riemannian surface. Thanks to the Poincar\'e uniformization theorem, 
we know that \( M \) is conformally equivalent to a unique closed surface of constant curvature. This surface is a quotient of one of the following covering surfaces by a free action of a discrete subgroup of its isometry group:
\begin{itemize}
    \item the Euclidean plane \( \mathbb{R}^2 \),
    \item the sphere \( S^2 \),
    \item the hyperbolic disk \( \mathbb{H}^2 \). 
\end{itemize}
(see, for instance, the book of \cite{de2016uniformization}).

Consequently, \( M \) can be viewed as a compact quotient of one of these three surfaces. 
These three surfaces can be covered by \( M \) via the free action of the discrete subgroup. We can then view the function \( u \) as a periodic function \( u_{\text{per}} \) on the covering space,
where \( u_{\text{per}} \) satisfies the following periodic Schr\"odinger equation on the covering:
\begin{equation}
    (-h^2\Delta + V_{\text{per}} - E)u_{\text{per}} = f_{\text{per}}.
\end{equation}

This reduces our analysis to just three cases:
\begin{itemize}
    \item \textbf{Case of the Euclidean plane \( \mathbb{R}^2 \):}
      We can replace \( U \) by a small ball \( B(0, R) \) centered at \( 0 \) inside \( M \). 
      It is sufficient to work in a larger ball \( B(0, R_0) \) containing \( M \).

    \item \textbf{Case of the hyperbolic disk \( \mathbb{H}^2 \):}
      We can replace \( U \) by a small ball \( B(0, R) \) centered at \( 0 \) inside \( M \).
      It is sufficient to work in a larger ball \( B(0, R_0) \) inside the disk. The Poincar\'e disk is equipped with the metric
      \begin{equation}
          ds^2 = \frac{4 \sum_i dx_i^2}{(1 - \sum_i x_i^2)^2},
      \end{equation}
      so on \( B(0, R_0) \), the metric is equivalent to a Euclidean metric. Consequently, this case can be treated similarly to the Euclidean plane case.

    \item \textbf{Case of the sphere \( S^2 \):}
      We can replace \( U \) by a small ball \( B(0, R) \) centered at the south pole of \( S^2 \) and consider \( M \) as \( S^2 \). 
      We then remove a smaller ball \( B(0, R/2) \) centered at the south pole inside \( B(0, R) \). 
      The perforated sphere can be unfolded onto a ball \( B(0, R_0) \), so that \( B(0, R) \setminus B(0, R/2) \) becomes a ring \( A(0, \tilde{R}_0, R_0) \) at the edge of \( B(0, R_0) \), where the metric becomes Euclidean.
\end{itemize}

Thus, in each case, we reduce our study to a ball \( B(0, R_0) \). However, we must still consider two subcases: \( U \) is identified with a ball \( B(0, R) \) centered at \( 0 \) inside \( B(0, R_0) \), or \( U \) is identified with an ring \( A(0, \tilde{R}_0, R_0) \) at the edge of \( B(0, R_0) \).

\subsection{Carleman estimate on a ball}
\label{section:CarlemansurBouleetPotentielBornee}
In view of the results from the previous section, let \( R_0 > 0 \), \( R > 0 \), and \( \tilde{R}_0 > 0 \) be such that \( R < R_0 \) and \( \tilde{R}_0 < R_0 \).
Let \( V_{\text{per}} \in L^\infty(B(0, R_0)) \). As a preliminary step, we construct a regularized version of the potential \( V_{\text{per}} \). To do so, we first extend \( V_{\text{per}} \) by \( 0 \) outside \( B(0, R_0) \) to the entire plane \( \mathbb{R}^2 \).
Let \( \varphi \in \mathcal{C}_0^\infty(\mathbb{R}^2, \mathbb{R}) \) be a regularizing kernel with compact support in the ball \( B(0, 1) \). 
We assume that \( \varphi \) satisfies the following properties:
\begin{itemize}
    \item \( \varphi \) has total mass \( 1 \), i.e., \( \int_{\mathbb{R}^2} \varphi(x) \, dx = 1 \),
    \item for \( i \in \{1, 2\} \), \( \partial_{x_i} \varphi \) has total mass \( 0 \), i.e., \( \int_{\mathbb{R}^2} \partial_{x_i} \varphi(x) \, dx = 0 \). 
\end{itemize}
We then define the regularized potential as follows:
\begin{equation}
\label{eq: definition-reg-de-V}
    \Tilde{V}_\theta(x)=\int_{\mathbb{R}^2} \frac{1}{\theta^2} V_{per}(x-t)\varphi(\frac{t}{\theta})dt, \mbox{where } x\in\mathbb{R}^2 \mbox{ and } \theta \in ]0,1]
\end{equation}
$\Tilde{V}_\theta$ is a "smooth version" of $V_{per}$.

\begin{prop}
	\label{prop: def de omega_V}
for $\theta>0$ and $i\in\{1,2\}$:

\begin{equation}
||\Tilde{V}_\theta-V_{per}||_{L^\infty(B(0,R_0))}\leq \omega_V(\theta)
\end{equation}
and:

\begin{equation}
\label{eq: divergence de tilde V theta}
||\partial_{x_i} \Tilde{V}_\theta||_{L^\infty(B(0,R_0))}\leq \frac{1}{\theta}\omega_V(\theta)
\end{equation}
with
\begin{equation}
\omega_V(\theta):=\sup_{x_0\in B(0,R_0)} \sup_{t \in B(0,\theta)}|V_{per}(x_0-t)-V_{per}(x_0)|
\end{equation}
\end{prop}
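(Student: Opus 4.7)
Both estimates are standard mollifier computations exploiting the normalization conditions imposed on $\varphi$ together with the modulus-of-continuity bound $\omega_V(\theta)$. In what follows I tacitly take $\varphi\geq 0$ (so that $\|\varphi\|_{L^1}=1$) and, for the derivative bound, absorb the harmless constant $\|\partial_{x_i}\varphi\|_{L^1}$ into the statement.

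First, for the $L^\infty$ bound on $\tilde V_\theta - V_{\text{per}}$, I would perform the change of variables $s = t/\theta$ in \eqref{eq: definition-reg-de-V} to rewrite
\[
\tilde V_\theta(x) = \int_{\mathbb{R}^2} V_{\text{per}}(x-\theta s)\,\varphi(s)\,ds.
\]
Combined with the normalization $\int\varphi = 1$, this yields the identity
\[
\tilde V_\theta(x) - V_{\text{per}}(x) = \int_{\mathbb{R}^2}\bigl[V_{\text{per}}(x-\theta s) - V_{\text{per}}(x)\bigr]\varphi(s)\,ds.
\]
Since $\supp\varphi\subset B(0,1)$, the point $\theta s$ lies in $B(0,\theta)$ on the support of the integrand, and the definition of $\omega_V(\theta)$ bounds the bracket by $\omega_V(\theta)$ uniformly for $x\in B(0,R_0)$. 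Taking absolute values inside the integral and using $\int\varphi=1$ gives the first estimate.

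For the derivative bound, I would first shift the $x$-dependence onto $\varphi$ via the change of variables $u = x-t$, so that after differentiating under the integral and undoing the substitution one obtains
\[
\partial_{x_i} \tilde V_\theta(x) = \frac{1}{\theta^3}\int_{\mathbb{R}^2} V_{\text{per}}(x-t)\,(\partial_i\varphi)(t/\theta)\,dt.
\]
The crucial step is now to invoke the vanishing-mean condition $\int \partial_i\varphi = 0$ (automatic from compact support, but made explicit in the hypotheses) to insert the constant $V_{\text{per}}(x)$ at no cost, producing
\[
\partial_{x_i} \tilde V_\theta(x) = \frac{1}{\theta^3}\int_{\mathbb{R}^2}\bigl[V_{\text{per}}(x-t)-V_{\text{per}}(x)\bigr](\partial_i\varphi)(t/\theta)\,dt.
\]
Rescaling $t = \theta s$ contributes a Jacobian factor $\theta^2$, leaving a single $1/\theta$ in front. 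On the support of $(\partial_i\varphi)(\cdot)$ one has $\theta s\in B(0,\theta)$, so the bracket is again bounded uniformly by $\omega_V(\theta)$, yielding the second estimate.

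There is no real obstacle in this calculation; the only point worth emphasizing is that the cancellation condition $\int \partial_i\varphi = 0$ is precisely what allows the derivative bound to involve the \emph{modulus of continuity} $\omega_V(\theta)$ rather than the much weaker quantity $\|V_{\text{per}}\|_{L^\infty}/\theta$ that a naïve estimate would produce. This improvement is what makes the regularized potential $\tilde V_\theta$ a genuinely useful approximation in the subsequent Carleman analysis.
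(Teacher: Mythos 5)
Your proof is correct and follows essentially the same line as the paper's: in both arguments the normalization $\int\varphi=1$ is used to insert $-V_{\text{per}}(x)$ into the mollifier integral for the first bound, and the vanishing mean $\int\partial_{x_i}\varphi=0$ is used for the same purpose in the derivative bound, after which the modulus-of-continuity bound $\omega_V(\theta)$ and the support condition $\supp\varphi\subset B(0,1)$ finish the estimate. You simply make explicit the change of variables that transfers the derivative onto $\varphi$ (and the role of the cancellation condition), which the paper's proof leaves implicit.
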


\begin{proof}

Indeed for $x\in B(0,R_0)$:
\begin{align}
    |\Tilde{V}_\theta(x)-V_{per}(x)| &\leq \int_{\mathbb{R}^2} \frac{1}{\theta^2} |V_{per}(x-t)-V_{per}(x)|\varphi(\frac{t}{\theta})dt\nn\\
    &\leq \sup_{t \in B(0,\theta)}|V_{per}(x-t)-V_{per}(x)| \nn\\
    &\leq \sup_{x_0\in B(0,R_0)} \sup_{t \in B(0,\theta)}|V_{per}(x_0-t)-V_{per}(x_0)| = \omega_V(\theta)
\end{align}

As for the second property, for $i\in\{1,2\}$:

\begin{align}
    |\partial_{x_i}\Tilde{V}_\theta(x)| &\leq \int_{\mathbb{R}^2} \frac{1}{\theta^{3}} |V_{per}(x-t)-V_{per}(x)||\partial_{x_i}\varphi(\frac{t}{\theta})|dt \nn\\
    &\leq \lVert \partial_{x_i}\varphi\rVert_{L^1} \frac{1}{\theta}\sup_{t \in B(0,\theta)}|V_{per}(x-t)-V_{per}(x)| \nn\\
    &\leq \lVert \nabla \varphi \rVert_{L^1} \frac{1}{\theta}\omega_V(\theta)
\end{align}
\end{proof}

If \( V \) is sufficiently regular (i.e., continuous), then \( \tilde{V}_\theta \) converges to \( V_{\text{per}} \) in the \( L^\infty \) norm as \( \theta \to 0 \). However, this convergence cannot hold in general for \( V \in L^\infty \): a smooth function can only converge uniformly to a continuous function.

We therefore need to control the rate at which the derivative of \( \tilde{V}_\theta \) may diverge. Specifically, unless \( V_{\text{per}} \) is constant, the modulus of continuity \( \omega_V(\theta) \) decreases at most as fast as \( \theta \) as \( \theta \to 0 \).

Due to the extension of \( V_{\text{per}} \) by \( 0 \) outside \( B(0, R_0) \), \( \omega_V \) may not tend to \( 0 \). To address this issue, we exploit the periodicity of \( V_{\text{per}} \) and assume that \( V_{\text{per}} \) is originally defined on a larger ball \( B(0, R_1) \) with \( R_1 > R_0 \), and then extended by \( 0 \) outside \( B(0, R_1) \). For sufficiently small \( \theta \), \( \omega_V \) then depends solely on the regularity of \( V_{\text{per}} \) within the ball \( B(0, R_0) \).

Having completed the regularization of \( V_{\text{per}} \), we can now proceed to prove our main results. We begin with the following theorem:

\begin{thm}
\label{thm:EStimCarleman}
Let $(\tilde{V}_\theta)_\theta$ be the family of potentials defined in the equation \ref{eq: definition-reg-de-V}. Then:

\begin{itemize}
\item there is a function $\phi \in C^\infty(\mathbb{R}^2,\mathbb{R})$ positive, $h_0 >0 $, $\theta_0>0$ and $C>0$, such that for all $ h\in \left]0, h_0 \right]$, $\theta \in \left]0, \theta_0 \right] $, $u \in C_0^\infty(B(0,R_0))$:

\begin{align}
    &h\int_{B(0,R_0)} e^{2\phi(x)\frac{\omega_V(\theta)^{1/2}}{h\theta^{1/2}}} \left(|u(x)|^2 +\frac{\theta}{\omega_V(\theta)} |h\nabla_x u(x)|^2\right)dx\nn\\
    &\leq C \frac{\theta^{3/2}}{\omega_V(\theta)^{3/2}}\int_{B(0,R_0)} e^{2\phi(x)\omega_V(\theta)^{1/2}/h\theta^{1/2}}|(-h^2\Delta_x +\Tilde{V}_{\theta}(x)- E)u(x)|^2 dx
\end{align}

\item there is a function $\phi \in C^\infty(\mathbb{R}^2,\mathbb{R})$ positive and radially decreasing, $h_0 >0 $, $\theta_0>0$ and $C>0$, such that for all $ h\in \left]0, h_0 \right]$, $\theta \in \left]0, \theta_0 \right] $, $u \in C_0^\infty(B(0,R_0)\setminus B(0,R/2))$:

\begin{align}
    &h\int_{B(0,R_0)} e^{2\phi(x)\frac{\omega_V(\theta)^{1/2}}{h\theta^{1/2}}} \left(|u(x)|^2 +\frac{\theta}{\omega_V(\theta)} |h\nabla_x u(x)|^2\right)dx\nn\\
    &\leq C \frac{\theta^{3/2}}{\omega_V(\theta)^{3/2}}\int_{B(0,R_0)} e^{2\phi(x)\omega_V(\theta)^{1/2}/h\theta^{1/2}}|(-h^2\Delta_x +\Tilde{V}_{\theta}(x)- E) u(x)|^2 dx
\end{align}
 
\end{itemize}
\end{thm}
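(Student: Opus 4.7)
The key observation is that the weight in the statement takes the form $e^{\phi/\tilde h}$ where $\tilde h := h\theta^{1/2}/\omega_V(\theta)^{1/2}$, so $\tilde h$ plays the role of an effective semi-classical scale. Writing $-h^2\Delta = (\omega_V(\theta)/\theta)(-\tilde h^2\Delta)$ factors the operator as $(\omega_V/\theta)(-\tilde h^2\Delta + W)$ with $W := (\theta/\omega_V)(\tilde V_\theta - E)$. Proposition~\ref{prop: def de omega_V} then gives $\|W\|_{L^\infty} = O(1)$ and, crucially, $\|\nabla W\|_{L^\infty} = O(1)$: this Lipschitz bound, independent of $h$ and $\theta$, is precisely what is needed to run a H\"ormander-type semi-classical Carleman estimate at scale $\tilde h$. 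The problem therefore reduces to establishing such an estimate for $-\tilde h^2\Delta + W$, uniformly in the family of potentials $W$.

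To do this, set $v = e^{\phi/\tilde h} u$ and split the conjugated operator $P_\phi := e^{\phi/\tilde h}(-\tilde h^2\Delta + W)e^{-\phi/\tilde h} = A + iB$ into self- and anti-self-adjoint parts with $A = -\tilde h^2\Delta - |\nabla\phi|^2 + W$ and $iB = 2\tilde h\,\nabla\phi\cdot\nabla + \tilde h\,\Delta\phi$. The standard identity
\[
\|P_\phi v\|^2 = \|Av\|^2 + \|Bv\|^2 + \langle i[A,B]v, v\rangle
\]
reduces matters to producing a lower bound $\langle i[A,B]v, v\rangle \gtrsim \tilde h\,(\|v\|^2 + \|\tilde h\nabla v\|^2)$. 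The principal symbol of $i[A,B]$ is computed via the semi-classical Poisson bracket and splits into a geometric term proportional to $\tilde h\,\xi^\top\operatorname{Hess}(\phi)\,\xi$ plus a Hessian--gradient piece in $\nabla|\nabla\phi|^2$; the only genuinely non-smooth contribution, $-2\tilde h\,\nabla\phi\cdot\nabla W$, is of size $O(\tilde h)$ by $\|\nabla W\|_\infty = O(1)$ and is absorbed by the geometric terms for an appropriate $\phi$. For the first case (arbitrary $u \in \mathcal{C}_0^\infty(B(0, R_0))$) one takes $\phi$ strictly convex on $\overline{B(0, R_0)}$, for instance $\phi(x) = e^{c|x|^2}$ with $c > 0$ tuned so H\"ormander's sub-ellipticity condition holds uniformly on the characteristic set of $P_\phi$. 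For the second ($u$ vanishing near $B(0, R/2)$), $\phi$ must additionally be radially decreasing: a profile of the form $\phi(r) = e^{-cr^2}$, appropriately adjusted on the annulus $[R/2, R_0]$, works, exploiting that $\supp u$ is bounded away from $0$ to keep the convex-along-bicharacteristics condition in force.

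Once the sub-elliptic bound $\tilde h(\|v\|^2 + \|\tilde h\nabla v\|^2) \lesssim \|P_\phi v\|^2$ is established at scale $\tilde h$, un-conjugating and undoing the scalar rescaling produces the claimed inequality. Explicitly, since $(\theta/\omega_V)|h\nabla u|^2 = |\tilde h\nabla u|^2$, multiplying the Carleman estimate by $h/\tilde h = (\omega_V/\theta)^{1/2}$ on the left converts the leading $\tilde h$ into the displayed $h$; on the right, the relation $\|(-\tilde h^2\Delta + W)v\|^2 = (\theta/\omega_V)^2\|Pv\|^2$ produces a factor $(\theta/\omega_V)^2$, and combined with the $(\omega_V/\theta)^{1/2}$ from the left-hand side normalization this yields exactly the displayed prefactor $\theta^{3/2}/\omega_V(\theta)^{3/2}$. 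The main obstacle I anticipate is the second case: crafting $\phi$ so that it is simultaneously radially decreasing and satisfies H\"ormander's sub-ellipticity condition is delicate and drives the precise form of the weight on the annulus (here we closely follow Vodev's prescription, adapted to our geometry). A secondary technical point is choosing the constant $\kappa > 0$ entering the definition of $\omega_V$ small enough, compatibly with $\supp\phi$, so that the Lipschitz constant of $W$ is genuinely $O(1)$ uniformly in $\theta$.
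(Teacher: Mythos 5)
Your overall scheme matches the paper's: introduce the effective scale $\tilde h = h\theta^{1/2}/\omega_V(\theta)^{1/2}$, conjugate by $e^{\phi/\tilde h}$, bound below via the commutator $[P_\phi^*,P_\phi]$ (with the potential contributing a residual of size $\|\nabla\phi\cdot\nabla V_\theta\|_\infty = O(1)$, made small by a further $\tilde h\mapsto\delta\tilde h$ rescaling), invoke H\"ormander sub-ellipticity plus G\aa rding, and then unwind the scaling — your bookkeeping producing the factor $\theta^{3/2}/\omega_V(\theta)^{3/2}$ is correct.

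There is, however, a genuine gap in your treatment of the first case (whole ball). Any smooth weight $\phi$ with an interior critical point in $B(0,R_0)$ fails H\"ormander's condition there: at a point $x_*$ with $\nabla\phi(x_*)=0$, the characteristic set $\{p_\phi=0\}$ contains $(x_*,0)$, and the Poisson bracket $\{\Re p_\phi,\Im p_\phi\}$ — whose every term contains a factor of $\xi$, $\nabla\phi$, or $\partial_r\phi$ — vanishes at $(x_*,0)$. For $\phi(x)=e^{c|x|^2}$ that critical point is $x_*=0$, and no choice of $c>0$ removes it, so the claimed sub-elliptic estimate is false near $(0,0)$ in phase space and the argument breaks. (The paper remarks on exactly this obstruction.) The paper's remedy is a gluing construction: choose two off-center points $x_1,x_2\in B(0,R_0)$, set
\[
e^{2\phi(x)/h}\;:=\;e^{2\phi_{x_1}(x)/h}\,\chi_1^2(x)\;+\;e^{2\phi_{x_2}(x)/h}\,\chi_2^2(x),\qquad \phi_{x_i}(x)=c\,e^{\|x-x_i\|},
\]
with cutoffs $\chi_i$ vanishing on a small ball $B_i$ around $x_i$ (where the $\phi_{x_i}$ is critical) and equal to $1$ outside a slightly larger ball $\tilde B_i$, arranged so that $\tilde B_1\cap\tilde B_2=\emptyset$. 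One then proves the Carleman estimate separately on the two deformed annuli $B(0,R_0)\setminus B_i$, where $\phi_{x_i}$ has no critical points and H\"ormander holds, and glues: the commutator terms $[-h^2\Delta,\chi_i]u$ live on $\tilde B_i\setminus B_i$, where the \emph{other} weight $\phi_{x_j}$ ($j\neq i$) dominates, so these errors are absorbed for $h$ small. This gluing step is not cosmetic; it is the mechanism that produces a valid global weight.

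On the annulus case you are on firmer ground: $\phi(r)=e^{-cr^2}$ with $c$ large enough so that $\phi''(r)+\phi'(r)/r>0$ for $r\in[R/2,R_0]$ (i.e.\ $c>4/R^2$) does satisfy H\"ormander on the relevant region, and is positive and radially decreasing; the paper's simpler choice $\phi(r)=r^{-1}$ achieves the same with a one-line computation $\{\Re p_\phi,\Im p_\phi\}=r^{-7}\geq R_0^{-7}>0$ on the characteristic set.
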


In our goal to obtain estimates on Riemann surface, the decreasing case will be used for the real plan and the Hyperbolic disk, the increasing will be used for the sphere.

\begin{proof}
In view of the results obtained previously on $\Tilde{V}_\theta$ (Eq~\ref{eq: divergence de tilde V theta}), let be:

\begin{equation}
    V_\theta := \frac{\theta}{\omega_V(\theta)}\Tilde{V}_\theta
\end{equation}

So its derivative is unlikely to diverge in infinite norms. If $V_{per}$ is not constant on $B(0,R_0)$, the quantity $\theta/\omega_V(\theta)$ is bounded because $\omega_V(\theta)$ can't decrease faster than $\theta$ which reachs $0$. In the case of $V_{per}$ constant , we just take $V_\theta =0$, $\omega_V=Id$ and replace $E$ by $E-V_{per}$. Also, the quantity $\omega_V(\theta)$ will be not $0$ for $\theta$ small enough for any $V_{per}$.

On the same way, we replace $E$ by:

\begin{equation}
    E_\theta := \frac{\theta}{\omega_V(\theta)}E
\end{equation}
Let be $u\in C_0^\infty (B(0,R_0))$ and let be $\phi \in \mathcal{C}^\infty(\R^2,\R)$, we can suppose $V_\theta -E_\theta$ and $\nabla_x V_\theta$ small. Indeed if we take $\delta>0$ small and put $\tilde{h}=h\delta$, then:

\begin{equation}
\label{eq: deltapetit}
e^{\phi/\tilde{h}}(-\tilde{h}^2\Delta_x +\delta^2(V_\theta -E_\theta))e^{-\phi/\tilde{h}}u= \delta^2 e^{\phi/\delta h}(-h^2\Delta_x +V_\theta -E_\theta)e^{-\phi/\delta h}u
\end{equation}
So we just have to replace $\phi$ by $\phi/\delta$ and $C$ by $C\delta^3$ in the theorem \ref{thm:EStimCarleman}.

In the following, we use $\bullet$ to denote the scalar product between vectors of $\mathbb{R}^2$ and $\langle \, , \, \rangle$ to denote the usual scalar product of $L^2$.

Let define the operator $P$:

\begin{align}
    P &:= e^{\phi/h}(-h^2\Delta_x +V_\theta-E_\theta) e^{-\phi/h}\nn\\
    &=-h^2\Delta_x +2h\nabla_x \phi \bullet \nabla_x + h\Delta_x \phi - \nabla_x \phi \bullet \nabla_x \phi +V_{\theta}-E_{\theta}
\end{align}

and let show this small result on $P$:

\begin{lem}
\label{lemme: Ineg de separation avec V}
Let $u\in C_0^\infty (B(0,R_0))$, $\forall h>0$:

\begin{align}
    \frac{1}{h} \lVert Pu\rVert_{L^2}^2 &\geq \frac{1}{h}\langle\left[ (e^{\phi/h}P_0e^{-\phi/h})^\ast  , e^{\phi/h}P_0e^{-\phi/h}  \right]u,u\rangle\nn\\
    &-4\lVert \nabla_x V_\theta \bullet \nabla_x\phi \rVert_\infty \lVert u\rVert_{L^2}^2    
\end{align}
Where $P_0=-h^2\Delta_x$.
\end{lem}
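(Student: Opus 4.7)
The plan is to carry out the classical Carleman decomposition: write $P = S + iT$ with $S = (P+P^*)/2$ and $T = (P-P^*)/(2i)$, both formally self-adjoint on $\mathcal{C}_0^\infty(B(0,R_0))$. The elementary identity
$$\|Pu\|_{L^2}^2 = \|Su\|_{L^2}^2 + \|Tu\|_{L^2}^2 + \langle i[S,T]u, u\rangle$$
immediately gives, after discarding the two non-negative norms, $\|Pu\|_{L^2}^2 \geq \langle i[S,T]u, u\rangle$, which will be the starting inequality.

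Next I would isolate the part of this commutator that comes from the pure conjugated Laplacian $Q := e^{\phi/h}P_0 e^{-\phi/h}$. Since $V_\theta - E_\theta$ is a real multiplication operator and hence self-adjoint, it contributes only to $S$, giving $S = \tfrac{1}{2}(Q+Q^*) + (V_\theta - E_\theta)$ and $T = \tfrac{1}{2i}(Q-Q^*)$. Bilinearity of the commutator then splits $[S,T]$ as
$$[S,T] = \bigl[\tfrac{1}{2}(Q+Q^*),\,\tfrac{1}{2i}(Q-Q^*)\bigr] + [V_\theta - E_\theta,\, T].$$
A direct expansion of the first bracket collapses it to a scalar multiple of $[Q^*, Q]$ (using $[Q+Q^*, Q-Q^*] = 2[Q^*, Q]$), which is exactly the quantity appearing on the right-hand side of the lemma.

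It remains to control the second bracket. Here $T$ is, up to normalisation, a first-order differential operator of the form $h\nabla_x\phi \bullet \nabla_x + \tfrac{h}{2}\Delta_x\phi$; the second summand commutes with $V_\theta - E_\theta$, so only the derivative part matters. A one-line computation using $[V_\theta, \partial_j] = -(\partial_j V_\theta)$ produces a multiplication operator proportional to $h\,\nabla_x V_\theta \bullet \nabla_x\phi$, whose operator norm is bounded by $h\,\|\nabla_x V_\theta \bullet \nabla_x \phi\|_{\infty}$. Pairing against $u$ and dividing the entire inequality by $h$ absorbs this factor of $h$ and gives precisely the claimed error term $4\|\nabla_x V_\theta \bullet \nabla_x \phi\|_\infty \|u\|_{L^2}^2$ once the numerical constants are collected.

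There is no genuine analytic obstacle: everything reduces to integration by parts, fully justified because $u \in \mathcal{C}_0^\infty(B(0,R_0))$ and all boundary terms vanish. The only step that needs care is the sign-and-factor bookkeeping when expanding the commutators; in particular one must verify that the anti-symmetric part $T$ of $P$ coincides with that of $Q$ (this is where the reality of $V_\theta - E_\theta$ enters decisively) so that the main commutator term is genuinely $[Q^*, Q]$ and not some twisted version involving $V_\theta$.
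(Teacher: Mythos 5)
Your $S+iT$ decomposition is the classical route and is conceptually the same as what the paper does, but if you actually carry out the bookkeeping you do not reproduce the constants in the lemma. Concretely, writing $Q=e^{\phi/h}P_0e^{-\phi/h}$, the identity you start from gives, after discarding $\|Su\|^2+\|Tu\|^2\geq 0$,
\begin{equation*}
\|Pu\|_{L^2}^2 \;\geq\; \langle i[S,T]u,u\rangle \;=\; \tfrac12\langle[P^\ast,P]u,u\rangle,
\end{equation*}
since $[P^\ast,P]=2i[S,T]$. Splitting $[P^\ast,P]=[Q^\ast,Q]+[V_\theta-E_\theta,Q-Q^\ast]$ and using $Q-Q^\ast=4h\,\nabla_x\phi\bullet\nabla_x+2h\,\Delta_x\phi$, the $V_\theta$ bracket is $-4h\,\nabla_x\phi\bullet\nabla_x V_\theta$. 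With your factor $\tfrac12$ in front, dividing by $h$ gives
\begin{equation*}
\frac1h\|Pu\|_{L^2}^2 \;\geq\; \frac{1}{2h}\langle[Q^\ast,Q]u,u\rangle - 2\|\nabla_xV_\theta\bullet\nabla_x\phi\|_\infty\|u\|_{L^2}^2,
\end{equation*}
i.e. half the commutator and half the error term, not what the lemma asserts. The reason is that discarding $\|Su\|^2+\|Tu\|^2=\tfrac12(\|Pu\|^2+\|P^\ast u\|^2)$ also throws away half of $\|Pu\|^2$. The paper instead uses the tighter trivial bound $\|Pu\|^2\geq\|Pu\|^2-\|P^\ast u\|^2=\langle[P^\ast,P]u,u\rangle$ (only $\|P^\ast u\|^2$ is discarded), which keeps the full commutator and hence the stated factor $4$. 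The discrepancy is harmless for the subsequent application --- every later step tolerates a change in these absolute constants --- but your claim that ``collecting numerical constants'' yields exactly $4$ is not correct for the decomposition you chose; you should either adjust the target inequality accordingly or switch to the $\|Pu\|^2-\|P^\ast u\|^2$ route as in the paper.
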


\begin{proof}
Indeed:
\begin{equation}
	 \frac{1}{h} \lVert Pu\rVert^2 \geq \frac{1}{h} (\lVert Pu\rVert^2 -\lVert P^\ast  u\rVert^2)
\end{equation}
With $P^\ast $ the adjoint of $P$: 

\begin{align}
	P^\ast  &= -h^2\Delta_x -(2h \Delta_x \phi +2h\nabla_x \phi \bullet \nabla_x) +h\Delta_x \phi - \nabla_x \phi \bullet \nabla_x \phi +V_{\theta}-E_{\theta} \nn\\
\end{align}

So:
\begin{align}
    \frac{1}{h} \lVert Pu\rVert^2 &\geq \frac{1}{h} (\lVert Pu\rVert^2 -\lVert P^\ast  u\rVert^2) \nn\\
    &=\frac{1}{h} (\lVert (e^{\phi/h} P_0e^{-\phi/h}) u\rVert^2  -\lVert (e^{\phi/h} P_0e^{-\phi/h})^* u\rVert^2) \nn\\
    &\quad + \frac{2}{h}(  Re\langle(V_{\theta}-E_{\theta})u,(2h\nabla_x \phi \bullet \nabla_x)u\rangle  + Re\langle(V_{\theta}-E_{\theta})u, (2h \Delta_x \phi +2h\nabla_x \phi \bullet \nabla_x)u\rangle) \nn\\
    &=\frac{1}{h}(\langle (e^{\phi/h} P_0e^{-\phi/h})^* (e^{\phi/h} P_0e^{-\phi/h}) ,u\rangle -\langle (e^{\phi/h} P_0e^{-\phi/h}) (e^{\phi/h} P_0e^{-\phi/h})^* ,u\rangle ) \nn\\
    &\quad\; + 4Re(  \langle(V_{\theta}-E_{\theta})u,(\nabla_x \phi \bullet \nabla_x)u\rangle  + \langle(V_{\theta}-E_{\theta})u, ( \Delta_x \phi +\nabla_x \phi \bullet \nabla_x)u\rangle) \nn\\
\end{align}
Because every terms are real, we don't need to take the real part. By integration by part in $\langle(V_{\theta}-E_{\theta})u,(\nabla_x \phi \bullet \nabla_x)u\rangle$, we obtain finally:
 
\begin{align}
    \frac{1}{h} \lVert Pu\rVert_{L^2}^2 &\geq \frac{1}{h}\langle\left[ (e^{\phi/h} P_0e^{-\phi/h})^\ast  ,(e^{\phi/h} P_0e^{-\phi/h})  \right]u,u\rangle\nn\\
    &-4\langle(\nabla_x V_\theta\bullet \nabla_x \phi) u,u\rangle
\end{align}

and it is easy to see that:
$$
\langle(\nabla_x V_\theta\bullet \nabla_x \phi) u,u\rangle \leq 
\lVert \nabla_x V_\theta \bullet \nabla_x\phi \rVert_\infty \lVert u\rVert_{L^2}^2
$$
\end{proof}

So with the lemma \ref{lemme: Ineg de separation avec V}, we can focus on the commutator term:
$$\left[ (e^{\phi/h}P_0e^{-\phi/h})^\ast  , e^{\phi/h}P_0e^{-\phi/h}  \right].$$
As the same way to prove Carleman estimate, we would like to find $d>0$, $h_0>0$ and $C_0>0$ such that

\begin{equation}
 \frac{1}{h} \langle\left[ (e^{\phi/h}P_0e^{-\phi/h})^\ast  , e^{\phi/h}P_0e^{-\phi/h}  \right]u,u\rangle + d\|(e^{\phi/h}P_0e^{-\phi/h})u \|_{L^2}^2 \geq C_0 \|u\|_{H^1}^2
 \label{eq-carlemanlaplacienlibre}
\end{equation}

for $0<h<h_0$.  
So in the same way as Carleman estimate,
we construct $\phi$ such that it verify the Hörmander condition:

\begin{equation}
p_\phi(x,\xi)=0 \Longrightarrow\{Re\, p_\phi, Im\, p_\phi \}(x, \xi)>0
\end{equation}

where $p_\phi$ is the principal symbol of $P_{\phi}:=e^{\phi/h}P_0e^{-\phi/h}$:

\begin{equation}
	p_\phi(x,\xi)=(\xi +i\nabla_x \phi(x))^2.
\end{equation}

We would like to construct $\phi$ as a radial function at $0$. This would be easy when $u$ has support in $B(0,R_0)\setminus B(0,R/2)$. But if the support can be in the whole $B(0,R_0)$, $\phi$ would have a critical point in $0$ (Fig.~\ref{fig:PointCritiqueFonctionRadiale}). This point causes the cancellation of $\{Im\, p_\phi, Re\, p_\phi \}$ when $p_\phi(0,\xi)=0$. So there will be a difficulty here for the increase case. Let's start by this one.
 
\begin{figure}
    \centering
    \includegraphics[width=0.6\textwidth]{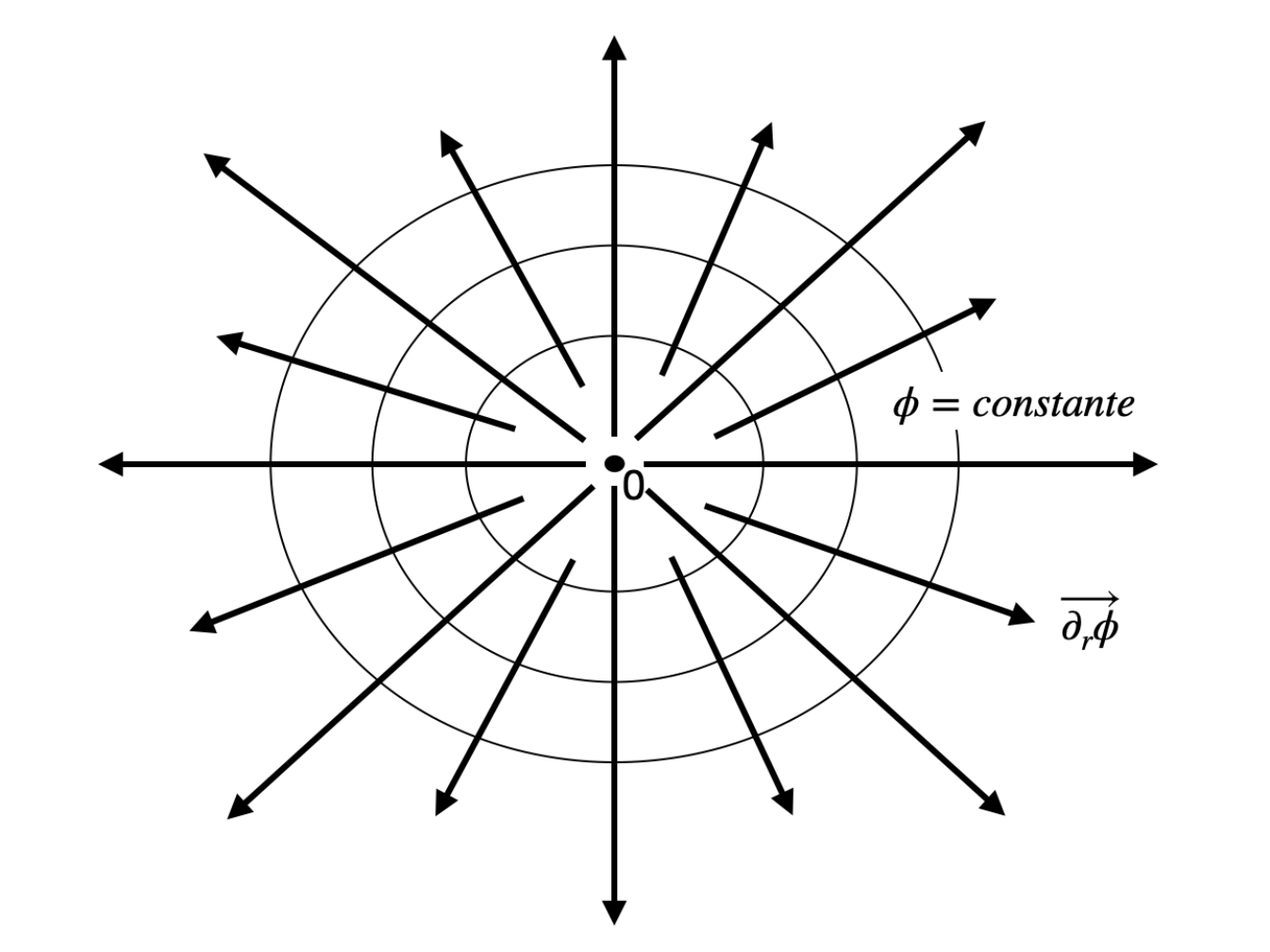}
    \caption{Critical point of a radial function: $\phi$ is a radial function at $0$, so $\phi$ is constant on circles centred at 0 and evolves orthogonally to these circles. Consequently, in $0$, the derivative vector should be $0$.}
    \label{fig:PointCritiqueFonctionRadiale}
\end{figure}

To solve this construction problem on $\phi$, we take two points $x_1$ and $x_2$ of $B(0,R_0)$ and pose:

\begin{equation}
\label{eq: definition-de-phi}
   e^{2\phi(x)/h}:= e^{2\phi_{x_1}(x)/h}\chi_{1}^2(x)+e^{2\phi_{x_2}(x)/h}\chi_{2}^2(x)
\end{equation}

where $\phi_{x_1}(x)=c \, e^{\|x-x_1\|}$, $\phi_{x_2}(x)=c \, e^{\|x-x_2\|}$ with $c>0$. $ \chi_1$ is $0$ on a ball $B_1$ center on $x_1$ and $1$ outside a ball $\tilde{B}_1$ center on $x_1$, likewise $\chi_2$ is $0$ on a ball $B_2$ center on $x_2$ and $1$ outside a ball $\tilde{B}_2$ center on $x_2$. We take $\tilde{B}_1$ and $\tilde{B}_2$ such that $\tilde{B}_1\cap \tilde{B}_2 = \emptyset$ (Fig.~\ref{fig:ConstructionDePhi}).

\begin{figure}
    \centering
    \includegraphics[width=0.6\textwidth]{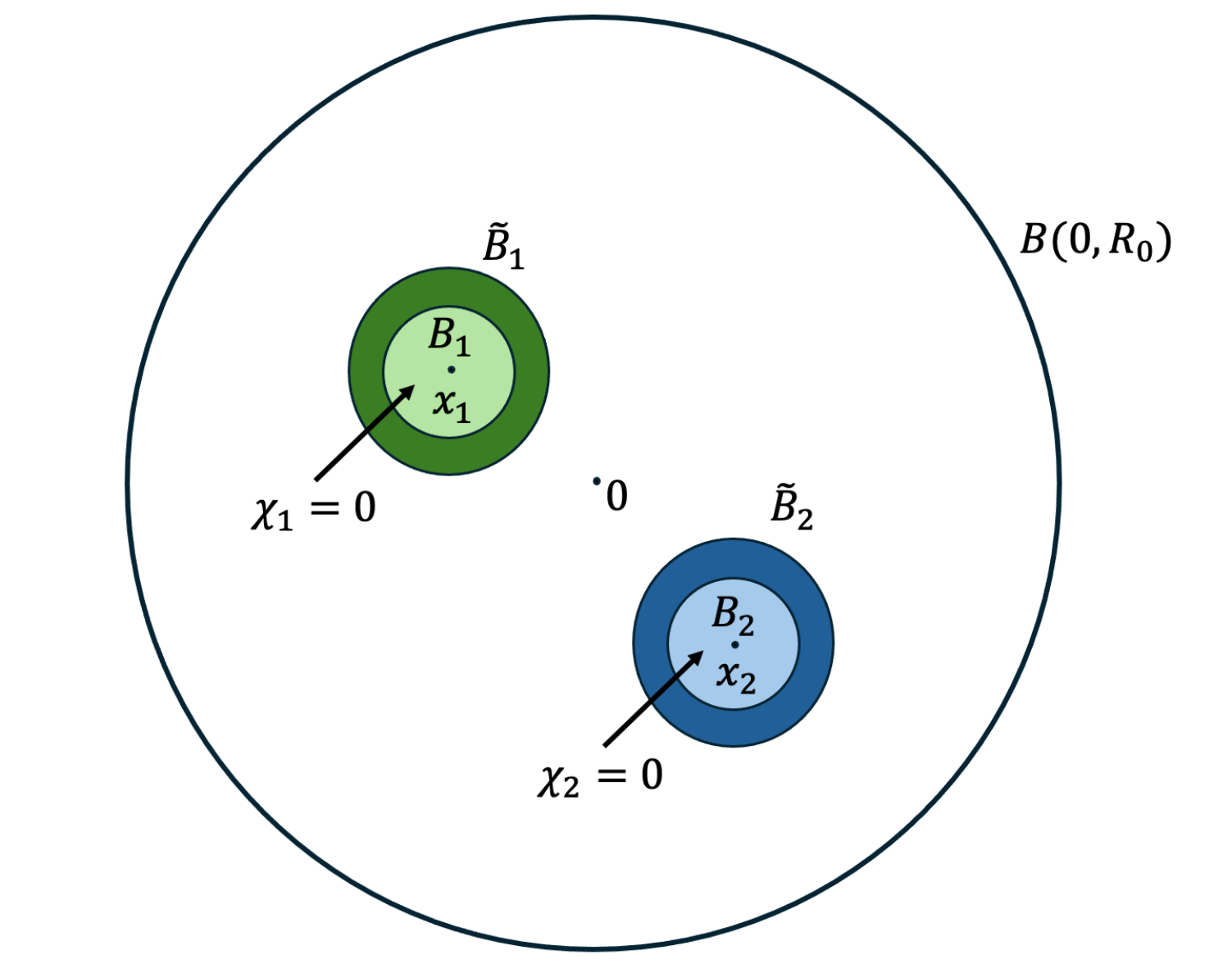}
    \caption{Partition of the ball $B(0,R_0)$ to construct $\phi$.
    }
    \label{fig:ConstructionDePhi}
\end{figure}

So, instead of considering a Carleman inequality on the ball $B(0,R_0)$, this interpolation leads us to consider two inequalities on deformed rings.

 To simplify the next calculations, we assume that we are now working on a ring $A(0,r_1,r_2)$ centered in $0$. Let $\phi$ be $\phi(r)= c\,e^{r}$. In polar coordinate, the Laplacian is:

\begin{equation}
	\label{eq: laplacien polaire}
    P_0= -h^2\partial_r^2 -\frac{1}{r}h^2\partial_r -\frac{h^2}{r^2}\Delta_{S^{1}}
\end{equation}

where $\Delta_{S^{1}}$ is the Beltrami's laplacian on the circle $S^1$.
Its principal symbol is:

\begin{equation}
    p_0(r,\rho)= \rho^2 + \frac{1}{r^2}\omega^2, \, (r,\rho)\in [r_1,r_2]\times\mathbb{R_+}
\end{equation}

where $\omega^2 >0$ is the principal symbol of $-h^2\Delta_{S^{1}}$. By conjugating by $e^{\phi/h}$, we obtain the following symbol:

\begin{equation}
    p_{\phi}(r,\rho)= (\rho+ i\partial_{r}\phi(r))^2 + \frac{1}{r^2}\omega^2
\end{equation}

that is:

\begin{align}
    \left\{
    \begin{array}{ll}
        \Re(p_{\phi})(r,\rho)= & \rho^2 - (\partial_{r}\phi(r))^2+ \frac{1}{r^2}\omega^2 \nn \\
        \Im(p_{\phi})(r,\rho)= & 2\rho \, \partial_r\phi(r) \nn
    \end{array}
\right.
\end{align}

And so the bracket takes the form:
\begin{equation}
    \frac{1}{4}\{\Re(p_\phi),\Im(p_\phi)\}(r,\rho)= \rho^2\partial_r^2\phi(r) +\left(\partial_r \phi(r)\partial_r^2\phi(r)+\frac{\omega^2}{r^3}\right)\partial_r\phi(r)
\end{equation} 

that is: 

\begin{equation}
    \frac{1}{4}\{\Re(p_\phi),\Im(p_\phi)\}(r,\rho)= c\,\rho^2e^{r} +c\,e^{r}\left(c^2e^{2r}+\frac{\omega^2}{r^3}\right) > c\,\rho^2 +c^3
\end{equation}
and so when $p_\phi=0$ (that is when $\rho=0$ and $\omega^2=r^2 \partial_r \phi$), then:

\begin{equation}
    \frac{1}{4}\{\Re(p_\phi),\Im(p_\phi)\}(r,\rho)\geq c^3
\end{equation}

At the same time, we bound by below $p_{\phi}$  when $\rho>>1$:

\begin{equation}
    |p_{\phi}(r,\rho)|^2\geq |\Re(p_\phi)(r,\rho)|^2\simeq \rho^4
\end{equation}

And so for $d>0$ and $c>0$ large enough:

\begin{equation}
    2\{\Re(p_\phi),\Im(p_\phi)\}(r,\rho) +d |p_{\phi}(r,\rho)|^2 \geq C(\rho^2+1)^2
\end{equation}

where $C>0$. It should be noted that \\
$2h\{\Re(p_\phi),\Im(p_\phi)\}(r,\rho)$ is the principal symbol of $\left[ P_{\phi}^\ast  , P_{\phi} \right]$ where $P_\phi=e^{\phi/h}P_0e^{-\phi/h}$.

With symbolic algebra and Gårding inequality (see for instance \cite{alinhac2007pseudo}), we obtain:

\begin{equation}
\label{eq:Carleman_laplacien_libre}
    \frac{1}{h}\langle\left[ P_{\phi}^\ast  , P_{\phi}  \right]u,u\rangle +d\|P_{\phi} u \|^2 \geq C_0\|u\|_{H^1}^2
\end{equation}

where $C_0>0$, $u \in \mathcal{C}_0^\infty(A(0,r_1,r_2))$ and $h$ small.

Now we need to add the residuals terms  with $V_{\theta}-E_{\theta}$ and $\nabla_x V_\theta$. So we take $u \in \mathcal{C}_0^\infty(A(0,r_1,r_2))$. By the lemma \ref{lemme: Ineg de separation avec V}, we show:

\begin{align}
    \frac{1}{h} \lVert Pu\rVert^2 +2d\lVert Pu\rVert^2 &\geq \frac{1}{h}\langle\left[ P_{\phi}^\ast  , P_{\phi}  \right]u,u\rangle + 2d(\lVert P_{\phi} u\rVert-\lVert (V_{\theta}-E_{\theta})u\rVert)^2 \nn\\
    &-4\lVert \nabla_x V_\theta\bullet \nabla_x\phi \rVert_\infty \lVert u\rVert^2  \nn\\
    &\geq\frac{1}{h}\langle\left[ (P_{\phi}^\ast  , P_{\phi}  \right]u,u\rangle + 2d\lVert P_{\phi} u\rVert^2+2d\lVert (V_{\theta}-E_{\theta})u\rVert^2  \nn\\
    &-4d\lVert P_{\phi} u\rVert\lVert (V_{\theta}-E_{\theta})u\rVert-4\lVert \nabla_x V_\theta\bullet \nabla_x\phi \rVert_\infty \lVert u\rVert^2  \nn\\
    &\geq\frac{1}{h}\langle\left[ P_{\phi}^\ast  , P_{\phi}  \right]u,u\rangle + d\lVert P_{\phi} u\rVert^2-2d\lVert (V_{\theta}-E_{\theta})u\rVert^2  \nn\\
    &-4\lVert \nabla_x V_\theta\bullet \nabla_x\phi \rVert_\infty \lVert u\rVert^2  \nn\\
    &\geq C_0\|u\|_{H^1}^2 -4\lVert \nabla_x V_\theta\bullet \nabla_x\phi \rVert_\infty \lVert u\rVert^2 - 2d\lVert (V_{\theta}-E_{\theta})\rVert_\infty^2 \lVert u\rVert^2
\end{align}

We've used the following inequality: $2|a||b| \leq \frac{1}{2}|a|^2 + 2|b|^2$.
We observe that the residual terms are in the form $ \mathcal{O}_{\|V_{\theta}\|_{\infty},\|\nabla_x V_{\theta}\|_{\infty},E_\theta}(1)\lVert u\rVert^2$ . By the assumption that $\|V_{\theta}-E_\theta\|_{\infty}$ and $\|\nabla_x V_{\theta}\|_{\infty}$ are small enough (see eq:~\ref{eq: deltapetit}), we can absorb the residual terms in the $H^1$ norm of $u$. We therefore deduce that on a ring $A(0,r_1,r_2)$, we have:

\begin{equation}
 \tilde{C}_0\lVert Pu\rVert^2 \geq h \|u\|_{H^1}^2
\end{equation}
for $0<h<h_0$ and $\tilde{C}_0>0$. This result adapts to the case of our deformed rings $B(0,R_0)\setminus B_1$ and $B(0,R_0)\setminus B_2$.

Finally, we need to glue together the inequalities obtained on the two rings. To do this, replace $u$ by $e^{\phi/h}u$ with $u\in \mathcal{C}_0^\infty(B(0,R_0))$ and by the definition of $\phi$ (\ref{eq: definition-de-phi}):

\begin{align}
    h\int_{B(0,R_0)} e^{2\phi(x)/h} &(|u(x)|^2 +|h\nabla_x u(x)|^2)dx \nn\\
    &= h\int_{B(0,R_0)} e^{2\phi_{x_1}(x)/h} \chi_1^2(x) (|u(x)|^2 +|h\nabla_x u(x)|^2)dx \nn\\
    &\quad +h\int_{B(0,R_0)} e^{2\phi_{x_2}(x)/h} \chi_2^2(x) (|u(x)|^2 +|h\nabla_x u(x)|^2)dx\nn\\
    &\leq \tilde{C}_0\int_{B(0,R_0)} e^{2\phi_{x_1}(x)/h} \chi_1^2(x) |(-h^2\Delta_x +V_{\theta}(x))u(x)|^2 dx   \nn\\
    &\quad + \tilde{C}_0\int_{B(0,R_0)} e^{2\phi_{x_2}(x)/h} \chi_2^2(x) |(-h^2\Delta_x +V_{\theta}(x))u(x)|^2 dx   \nn\\
    &\quad +  \int_{B(0,R_0)} e^{2\phi_{x_1}(x)/h}  (\tilde{C}_0|[-h^2\Delta_x, \chi_1]u(x)|^2 + |[-h\nabla_x, \chi_1]u(x)|^2)dx   \nn\\
    &\quad+  \int_{B(0,R_0)} e^{2\phi_{x_2}(x)/h}  (\tilde{C}_0|[-h^2\Delta_x, \chi_2]u(x)|^2 + |[-h\nabla_x, \chi_2]u(x)|^2) dx   \nn\\
\end{align}

In the last two right-hand integrals, the terms $|[-h^2\Delta_x, \chi_i]u(x)|^2$ and $|[-h\nabla_x, \chi_i]u(x)|^2$ are majorized by $C_i h^2 (|u|^2 +|h\nabla_x u|^2)$ with $C_i>0$ and the integrations are done on the sets $Supp(\nabla_x \chi_i)\subset \tilde{B}_i\setminus B_i$.\\
 By construction $\phi_1$ is smaller than $\phi_2$ on $ \tilde{B}_1\setminus B_1$ and vice versa. And since $ \tilde{B}_1\setminus B_1 \subset Supp(\chi_2)$ (respectively $ \tilde{B}_2\setminus B_2 \subset Supp(\chi_1)$) the last two right-hand integrals can be absorbed by the first two right-hand integrals for $h$ sufficiently small:

\begin{align}
\int_{B(0,R_0)} e^{2\phi_{x_1}(x)/h}  &(\tilde{C}_0|[-h^2\Delta_x, \chi_1]u(x)|^2 + |[-h\nabla_x, \chi_1]u(x)|^2)dx \nn\\
&=\mathcal{O}_h \left(h\int_{B(0,R_0)} e^{2\phi_{x_2}(x)/h} \chi_2^2(x) (|u(x)|^2 +|h\nabla_x u(x)|^2)dx \right)
\end{align}

\begin{align}
\int_{B(0,R_0)} e^{2\phi_{x_2}(x)/h}  &(\tilde{C}_0|[-h^2\Delta_x, \chi_2]u(x)|^2 + |[-h\nabla_x, \chi_2]u(x)|^2)dx \nn\\
&=\mathcal{O}_h \left(h\int_{B(0,R_0)} e^{2\phi_{x_1}(x)/h} \chi_1^2(x) (|u(x)|^2 +|h\nabla_x u(x)|^2)dx \right)
\end{align}

Finally, there is $h_0>0$ and $C>0$ such that for all $0<h<h_0$ and $u\in \mathcal{C}_0^\infty(B(0,R_0))$:

\begin{equation}
    h\int_{B(0,R_0)} e^{2\phi(x)/h} (|u(x)|^2 +|h\nabla_x u(x)|^2)dx \leq C\int_{B(0,R_0)} e^{2\phi(x)/h}|(-h^2\Delta_x +V_{\theta}(x)- E_{\theta})u(x)|^2 dx
\end{equation}

If we replace $h^2$ by $\theta\, h^2 / \omega_V(\theta)$, we obtain:

\begin{align}
    &h\int_{B(0,R_0)} e^{2\phi(x)\omega_V(\theta)^{1/2}/h\theta^{1/2}} (|u(x)|^2 +\theta |h\nabla_x u(x)|^2/\omega_V(\theta))dx\nn\\
    &\leq C \frac{\theta^{3/2}}{\omega_V(\theta)^{3/2}}\int_{B(0,R_0)} e^{2\phi(x)\omega_V(\theta)^{1/2}/h\theta^{1/2}}|(-h^2\Delta_x +\Tilde{V}_{\theta}(x)- E)u(x)|^2 dx
\end{align}

for $u \in \mathcal{C}_0^\infty(B(0,R_0))$.
which is the first part of the theorem \ref{thm:EStimCarleman}.

For the second case, $u$ has support in $B(0,R_0)\setminus B(0,R/2)$ so we will be far from $0$ and so we can considerate a radial function for $\phi$ and start at the equation \ref{eq: laplacien polaire}. Let's take $\phi=r^{-1}$. So when $p_\phi=0$ (that is when $\rho=0$ and $\omega^2=r^2 \partial_r \phi$), we have:

\begin{equation}
	\{Re (p_\phi), Im (p_\phi) \}= r^{-7} \geq R_0^{-7}>0
\end{equation}

The proof then proceeds in the same way as before on only one ring. So we obtain:

\begin{align}
    &h\int_{B(0,R_0)} e^{2\phi(x)\omega_V(\theta)^{1/2}/h\theta^{1/2}} (|u(x)|^2 +\theta |h\nabla_x u(x)|^2/\omega_V(\theta))dx\nn\\
    &\leq C \frac{\theta^{3/2}}{\omega_V(\theta)^{3/2}}\int_{B(0,R_0)} e^{2\phi(x)\omega_V(\theta)^{1/2}/h\theta^{1/2}}|(-h^2\Delta_x +\Tilde{V}_{\theta}(x)- E)u(x)|^2 dx
\end{align}

for $h$ small enough and $u \in \mathcal{C}_0^\infty(B(0,R_0)\setminus B(0,R/2))$.

\end{proof}

Building on Theorem~\ref{thm:EStimCarleman}, we now establish the following corollary for \( V_{per} \):

\begin{cor}
\label{cor: cor1}
\hfill\\
\begin{itemize}
\item There is a function $\tilde{\phi} \in C^\infty(\mathbb{R}^2,\mathbb{R})$ positive, $h_0 >0 $ and $C>0$, such that for all $ h\in \left]0, h_0 \right]$, $u \in C_0^\infty(B(0,R_0))$:
\begin{align}
    &\int_{B(0,R_0)} e^{2\tilde{\phi}(x)\,\tilde{\omega}_V(h)^{1/2}/ h^{4/3}} (|u(x)|^2 +|h\nabla_x u(x)|^2)dx\nn\\
    &\leq C \frac{1}{\tilde{\omega}_V(h)^{1/2}h^{2/3}}\int_{B(0,R_0)} e^{2\tilde{\phi}(x)\,\tilde{\omega}_V(h)^{1/2}/ h^{4/3}}|(-h^2\Delta_x + V_{per}- E)u|^2dx
\end{align}
with $\tilde{\omega}_V(h)=\omega_V( h^{2/3}\kappa)$, $\kappa>0$.
 
\item There is a function $\tilde{\phi} \in C^\infty(\mathbb{R}^2,\mathbb{R})$ positive and radially decreasing, $h_0 >0 $ and $C>0$, such that for all $ h\in \left]0, h_0 \right]$, $u \in C_0^\infty(B(0,R_0)\setminus B(0,R/2))$:
\begin{align}
    &\int_{B(0,R_0)} e^{2\tilde{\phi}(x)\,\tilde{\omega}_V(h)^{1/2}/ h^{4/3}} (|u(x)|^2 +|h\nabla_x u(x)|^2)dx\nn\\
    &\leq C \frac{1}{\tilde{\omega}_V(h)^{1/2}h^{2/3}}\int_{B(0,R_0)} e^{2\tilde{\phi}(x)\,\tilde{\omega}_V(h)^{1/2}/ h^{4/3}}|(-h^2\Delta_x + V_{per}- E)u|^2dx
\end{align}
with $\tilde{\omega}_V(h)=\omega_V( h^{2/3}\kappa)$, $\kappa>0$.
\end{itemize}
\end{cor}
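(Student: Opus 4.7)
The plan is to apply Theorem \ref{thm:EStimCarleman} with the specific coupling $\theta = h^{2/3}\kappa$, so that $\omega_V(\theta) = \tilde{\omega}_V(h)$ by definition. With this choice the exponential weight $\omega_V(\theta)^{1/2}/(h\theta^{1/2})$ simplifies to $\tilde{\omega}_V(h)^{1/2}/(h^{4/3}\kappa^{1/2})$, and setting $\tilde{\phi}(x) := \phi(x)/\kappa^{1/2}$ recovers exactly the exponential appearing on both sides of the corollary. After this substitution the LHS of the Carleman inequality has coefficient $h$ in front of $|u|^2$ and $h^{5/3}\kappa/\tilde{\omega}_V(h)$ in front of $|h\nabla u|^2$, while the RHS prefactor is $Ch\kappa^{3/2}/\tilde{\omega}_V(h)^{3/2}$.

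The next step is to replace $\tilde{V}_\theta$ by $V_{per}$ on the right-hand side. Writing $(-h^2\Delta + \tilde V_\theta - E)u = (-h^2\Delta + V_{per} - E)u + (\tilde V_\theta - V_{per})u$, using $(a+b)^2 \leq 2(a^2+b^2)$, and invoking the bound $\|\tilde V_\theta - V_{per}\|_\infty \leq \tilde{\omega}_V(h)$ from Proposition \ref{prop: def de omega_V}, one produces an error term of order $h\kappa^{3/2}\tilde{\omega}_V(h)^{1/2}\int e^{W}|u|^2$, where $W := 2\tilde{\phi}(x)\tilde{\omega}_V(h)^{1/2}/h^{4/3}$. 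The crucial observation is that since $V_{per} \in L^\infty$ one has $\tilde{\omega}_V(h) \leq 2\|V_{per}\|_\infty$ uniformly in $h$, so choosing $\kappa$ small enough (independently of $h$) makes this coefficient $\leq 1/2$ of the coefficient $h$ of $|u|^2$ on the LHS, and the error term is absorbed.

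Finally, dividing through by the $|h\nabla u|^2$ coefficient $h^{5/3}\kappa/\tilde{\omega}_V(h)$ rescales the RHS prefactor to $C\kappa^{1/2}/(h^{2/3}\tilde{\omega}_V(h)^{1/2})$ and makes the $|h\nabla u|^2$ coefficient equal to $1$; the $|u|^2$ coefficient becomes $\tilde{\omega}_V(h)/(2h^{2/3}\kappa)$, which in the regime of interest is $\geq 1$ and may be relaxed to $1$, yielding exactly the first item of the corollary. The main point of difficulty here is really the choice $\theta = h^{2/3}\kappa$, which is dictated by balancing the approximation error $\omega_V(\theta)$ against the derivative blow-up $\omega_V(\theta)/\theta$ of $\tilde{V}_\theta$; once this choice is made the rest is algebraic manipulation together with the uniform $L^\infty$ bound used for absorption. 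The second item (radially decreasing $\tilde{\phi}$, for $u$ supported in $B(0,R_0)\setminus B(0,R/2)$) follows verbatim by the same manipulations applied to the second part of Theorem \ref{thm:EStimCarleman}.
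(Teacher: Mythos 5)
Your proposal is correct and takes essentially the same route as the paper's own proof: substituting $\theta = h^{2/3}\kappa$ into Theorem \ref{thm:EStimCarleman}, splitting $\tilde V_\theta = V_{per} + (\tilde V_\theta - V_{per})$, bounding the error via Proposition \ref{prop: def de omega_V} as $2C\theta^{3/2}\omega_V(\theta)^{1/2}\int e^{W}|u|^2$, absorbing it by shrinking $\kappa$ (using the uniform bound $\tilde\omega_V(h)\le 2\|V_{per}\|_\infty$), and setting $\tilde\phi = \phi/\kappa^{1/2}$. The explicit tracking of the $|u|^2$ and $|h\nabla u|^2$ coefficients and the final rescaling step are slightly more detailed than in the paper but amount to the same algebra.
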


\begin{proof}

Indeed from the theorem \ref{thm:EStimCarleman} we have:

\begin{align}
    &h\int_{B(0,R_0)} e^{2\phi(x)\omega_V(\theta)^{1/2}/h\theta^{1/2}} (|u(x)|^2 +\theta |h\nabla_x u(x)|^2/\omega_V(\theta))dx\nn\\
    &\leq C \frac{\theta^{3/2}}{\omega_V(\theta)^{3/2}}\int_{B(0,R_0)} e^{2\phi(x)\omega_V(\theta)^{1/2}/h\theta^{1/2}}|(-h^2\Delta_x +\Tilde{V}_{\theta}(x)- E^2)u(x)|^2 dx
\end{align}

for  $u \in C_0^\infty(B(0,R_0))$ (or  $u \in C_0^\infty(B(0,R_0)\setminus B(0,R/2))$), $ h\in \left]0, h_0 \right]$ and $\theta \in \left]0, \theta_0 \right] $. Then:

\begin{align}
     &h\int_{B(0,R_0)} e^{2\phi(x)\omega_V(\theta)^{1/2}/h\theta^{1/2}} (|u(x)|^2 +\theta |h\nabla_x u(x)|^2/\omega_V(\theta))dx\nn\\
     &\leq 2C \frac{\theta^{3/2}}{\omega_V(\theta)^{3/2}}\int_{B(0,R_0)} e^{2\phi(x)\,\omega_V(\theta)^{1/2}/h\theta^{1/2}}|(-h^2\Delta_x + V_{per}(x)- E)u(x)|^2 dx\nn\\
     & \quad +2C \frac{\theta^{3/2}}{\omega_V(\theta)^{3/2}} \int_{B(0,R_0)} e^{2\phi(x)\,\omega_V(\theta)^{1/2}/h\theta^{1/2}}|(\Tilde{V}_\theta(x) -V_{per}(x))u(x)|^2dx\nn\\
     &\leq  2C \frac{\theta^{3/2}}{\omega_V(\theta)^{3/2}}\int_{B(0,R_0)} e^{2\phi(x)\,\omega_V(\theta)^{1/2}/h\theta^{1/2}}|(-h^2\Delta_x + V_{per}(x)- E)u(x)|^2dx\nn\\
     & \quad + 2C \theta^{3/2}\omega_V(\theta)^{1/2} \int_{B(0,R_0)} e^{2\phi(x)\,\omega_{V}(\theta)^{1/2}/h\theta^{1/2}}|u(x)|^2 dx
     \label{eq: def_theta}
\end{align}

by definition of $\omega_{V}$ \ref{prop: def de omega_V}. We take $\theta = h^{2/3}\kappa$ with $\kappa>0$ small enough such that the last terme can be absorb by the first one. We obtain then:

\begin{align}
    &\int_{B(0,R_0)} e^{2\tilde{\phi}(x)\,\tilde{\omega}_V(h)^{1/2}/ h^{4/3}} (|u(x)|^2 +|h\nabla_x u(x)|^2)dx\nn\\
    &\leq  \frac{\tilde{C}}{\tilde{\omega}_V(h)^{1/2}h^{2/3}}\int_{B(0,R_0)} e^{2\tilde{\phi}(x)\,\tilde{\omega}_V(h)^{1/2}/ h^{4/3}}|(-h^2\Delta_x + V_{per}- E)u|^2dx
\end{align}

with $\tilde{C}>0$, $ h\in \left]0, h_0 \right]$, $\theta \in \left]0, \theta_0 \right] $, $u\in \mathcal{C}_0^\infty (B(0,R_0))$ (or $u\in \mathcal{C}_0^\infty (B(0,R_0)\setminus B(0,R/2))$ in the second case) and where $\tilde{\omega}_V(h)=\omega_V( h^{2/3}\kappa)$, $\tilde{\phi}(x)=\phi(x)/\kappa^{1/2}$. The choice of $\kappa$ is necessary only when $V_{per}$ is not continuous on $B(0,R_0)$ (that is $\omega_V$ don't decrease to $0$ when $h$ tend to $0$).

\end{proof}

In Equation~\eqref{eq: def_theta}, we initially choose $\theta = h^{2/3}\kappa$ with $\kappa > 0$. However, if $V$ is $\alpha$-H\"older continuous, this choice can be refined. Specifically, we require that
$$2C \theta^{3/2} \omega_V(\theta)^{1/2} \ll h.$$

Given that $V$ is $\alpha$-H\"older continuous, we have $\omega_V(\theta)^{1/2} \leq \theta^{\alpha/2}$. Consequently, it suffices to ensure that
$$\theta \ll \left(\frac{h}{2C}\right)^{2/(\alpha+3)}.$$

We thus set $\theta = h^{2/(\alpha+3)}\kappa$, where $\kappa > 0$ is sufficiently small. As a result, we obtain the following bound:
$$\frac{\omega_V(\theta)^{1/2}}{h\theta^{1/2}} \leq \tilde{\kappa} h^{-4/(\alpha+3)},$$
for some $\tilde{\kappa} > 0$. This recovers the estimate established in~\cite{vodevSemiclassicalResolventEstimates2020}.

To extend these results to the periodic function $u_{\text{per}}$, we must eliminate the assumption of compact support. This naturally leads us to the following corollary:

\begin{cor}
\label{cor: cor2}
\hfill\\
\begin{itemize}
\item There is a function $\tilde{\phi} \in C^\infty(\mathbb{R}^2,\mathbb{R})$ positive, $h_0 >0 $ and $C>0$, such that for all $ h\in \left]0, h_0 \right]$, $u \in H^2(B(0,R_0))$:
\begin{align}
    &\int_{B(0,\tilde{R}_0)} e^{2\tilde{\phi}(x)\,\tilde{\omega}_V(h)^{1/2}/ h^{4/3}} (|u(x)|^2 +|h\nabla_x u(x)|^2)dx\nn\\
    &\leq C \frac{1}{\tilde{\omega}_V(h)^{1/2}h^{2/3}}\int_{B(0,R_0)} e^{2\tilde{\phi}(x)\,\tilde{\omega}_V(h)^{1/2}/ h^{4/3}}|(-h^2\Delta_x + V_{per}- E)u|^2dx\nn\\
    &\quad +C \frac{h^{4/3}}{\tilde{\omega}_V(h)^{1/2}}\int_{A(0,\tilde{R}_0,R_0)} e^{2\tilde{\phi}(x)\,\tilde{\omega}_V(h)^{1/2}/ h^{4/3}} (|u(x)|^2 +|h\nabla_x u(x)|^2)dx
\end{align}
with $\tilde{\omega}_V(h)=\omega_V( h^{2/3}\kappa)$, $\kappa>0$ and $A(0,\tilde{R}_0,R_0)$ is a ring centre on $0$ with radius $\tilde{R}_0<R_0$.
 
\item There is a function $\tilde{\phi} \in C^\infty(\mathbb{R}^2,\mathbb{R})$ positive and radially decreasing, $h_0 >0 $ and $C>0$, such that for all $ h\in \left]0, h_0 \right]$, $u \in H^2(B(0,R_0)\setminus B(0,R/2))$:
\begin{align}
    &\int_{A(0,R_1,R_2)} e^{2\tilde{\phi}(x)\,\tilde{\omega}_V(h)^{1/2}/ h^{4/3}} (|u(x)|^2 +|h\nabla_x u(x)|^2)dx\nn\\
    &\leq C \frac{1}{\tilde{\omega}_V(h)^{1/2}h^{2/3}}\int_{B(0,R_0)} e^{2K\,\tilde{\omega}_V(h)^{1/2}/ h^{4/3}}|(-h^2\Delta_x + V_{per}- E)u|^2dx\nn\\
    &\quad + C \frac{h^{4/3}}{\tilde{\omega}_V(h)^{1/2}}\int_{B(0,R)} e^{2K\,\tilde{\omega}_V(h)^{1/2}/ h^{4/3}} (|u(x)|^2 +|h\nabla_x u(x)|^2)dx\nn\\
    &\quad +C \frac{h^{4/3}}{\tilde{\omega}_V(h)^{1/2}}\int_{A(0,R_2,R_0)} e^{2\tilde{\phi}(x)\,\tilde{\omega}_V(h)^{1/2}/ h^{4/3}} (|u(x)|^2 +|h\nabla_x u(x)|^2)dx
\end{align}
with $\tilde{\omega}_V(h)=\omega_V( h^{2/3}\kappa)$, $\kappa>0$, $K>\tilde{\phi}$ on $A(0,R_1,R_0)$, $B_r$ a small ball centre on $0$ with radius $r$ and $A(0,R_1,R_2)$ is a ring centre on $0$ with radius $R/2<R_1<R<R_2<R_0$.
\end{itemize}
\end{cor}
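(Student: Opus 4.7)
The plan is to dispense with the compact-support assumption by applying Corollary~\ref{cor: cor1} to $\chi u$ for an appropriate smooth spatial cutoff $\chi$, and to absorb the residual generated by the commutator $[-h^2\Delta_x,\chi]$, which is supported on $\supp\nabla_x\chi$. The $h^2$ gain from this commutator will combine with the prefactor $1/(\tilde\omega_V(h)^{1/2}h^{2/3})$ of Corollary~\ref{cor: cor1} to produce exactly the coefficient $h^{4/3}/\tilde\omega_V(h)^{1/2}$ appearing in front of the remainder integrals of the statement.

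For the first bullet, I pick $\chi\in\mathcal{C}_0^\infty(B(0,R_0))$ with $0\le\chi\le 1$, $\chi\equiv 1$ on $B(0,\tilde R_0)$ and $\supp\nabla_x\chi\subset A(0,\tilde R_0,R_0)$. Since $u\in H^2(B(0,R_0))$, a standard density argument allows the first bullet of Corollary~\ref{cor: cor1} to be applied to $\chi u$. The Leibniz identity
\begin{equation*}
(-h^2\Delta_x+V_{\text{per}}-E)(\chi u)=\chi(-h^2\Delta_x+V_{\text{per}}-E)u+[-h^2\Delta_x,\chi]u,
\end{equation*}
together with the pointwise bound $|[-h^2\Delta_x,\chi]u|^2\le C_\chi h^2(|u|^2+|h\nabla_x u|^2)$ on $\supp\nabla_x\chi$ and the inequality $(a+b)^2\le 2a^2+2b^2$, converts the right-hand side of Corollary~\ref{cor: cor1} into the two stated terms: the source term (using $\chi^2\le 1$ and extending to $B(0,R_0)$) and the remainder over $A(0,\tilde R_0,R_0)$. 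On the left-hand side, $\chi\equiv 1$ on $B(0,\tilde R_0)$ makes the integrand coincide with $|u|^2+|h\nabla_x u|^2$ there, and restricting the domain of integration to $B(0,\tilde R_0)$ yields the announced bound.

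For the second bullet, I apply the same mechanism with $\chi\in\mathcal{C}_0^\infty(B(0,R_0)\setminus B(0,R/2))$ equal to $1$ on $A(0,R_1,R_2)$ and with $\supp\nabla_x\chi$ consisting of an inner piece contained in $B(0,R)$ and an outer piece contained in $A(0,R_2,R_0)$. Applying the second bullet of Corollary~\ref{cor: cor1} to $\chi u$ and splitting the commutator contribution along these two pieces produces the two remainder integrals. On the outer piece the sharp weight $e^{2\tilde\phi\,\tilde\omega_V(h)^{1/2}/h^{4/3}}$ is retained and the integration domain enlarged from $\supp\nabla_x\chi$ to $A(0,R_2,R_0)$. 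On the inner piece, since $\tilde\phi$ is radially decreasing and hence bounded on $\supp\chi$ (which is bounded away from $0$), one bounds the weight from above by $e^{2K\,\tilde\omega_V(h)^{1/2}/h^{4/3}}$ for any constant $K$ dominating $\tilde\phi$ on that set, and enlarges the domain to $B(0,R)$. The source-term integral is handled analogously: its integrand $|\chi(-h^2\Delta_x+V_{\text{per}}-E)u|^2$ vanishes where $\chi=0$ (in particular near the origin), so its $\tilde\phi$-weight may likewise be replaced by the constant $e^{2K\,\tilde\omega_V(h)^{1/2}/h^{4/3}}$ and the domain extended to all of $B(0,R_0)$.

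The main obstacle is not conceptual but bookkeeping: one must select the auxiliary radii defining $\chi$, and the constant $K$, so that the two remainder domains can be written cleanly as $B(0,R)$ and $A(0,R_2,R_0)$ as in the statement, and so that $K$ uniformly dominates $\tilde\phi$ on the bounded-away-from-zero region where $\chi$ is nonzero. The density step—approximating $\chi u\in H_0^2$ by $\mathcal{C}_0^\infty$ functions in order to invoke Corollary~\ref{cor: cor1}—is entirely standard.
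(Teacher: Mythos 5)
Your proof follows essentially the same strategy as the paper: apply Corollary~\ref{cor: cor1} to $\chi u$ for a cutoff $\chi$ that is $1$ on the target region, expand the commutator $[-h^2\Delta_x,\chi]u$ supported on $\supp\nabla_x\chi$, use the pointwise bound $|[-h^2\Delta_x,\chi]u|^2 \le C_\chi h^2(|u|^2+|h\nabla_x u|^2)$, and (in the second case) replace the decreasing weight $\tilde\phi$ by a dominating constant $K$ on the inner piece. The argument is correct; if anything you are a touch more careful than the paper on the choice of $K$ (the paper cites $K=\tilde\phi(R_1)$, which does not dominate $\tilde\phi$ on the inner annulus $A(0,R/2,R_1)$, whereas you explicitly require $K$ to dominate $\tilde\phi$ on all of $\supp\chi$).
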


\begin{proof}
	By density, of smooth functions in $H^2(B(0,R_0))$, it is sufficient to show the result for smooth functions. 
Let's start with the first point. Thanks to the corollary \ref{cor: cor1} we know that for $u \in C_0^\infty(B(0,R_0))$, we have for $h$ small enough:

\begin{align}
    &\int_{B(0,R_0)} e^{2\tilde{\phi}(x)\,\tilde{\omega}_V(h)^{1/2}/ h^{4/3}} (|u(x)|^2 +|h\nabla_x u(x)|^2)dx\nn\\
    &\leq C \frac{1}{\tilde{\omega}_V(h)^{1/2}h^{2/3}}\int_{B(0,R_0)} e^{2\tilde{\phi}(x)\,\tilde{\omega}_V(h)^{1/2}/ h^{4/3}}|(-h^2\Delta_x + V_{per}- E)u|^2dx
\end{align}

Let's take $u\in \mathcal{C}^\infty(B(0,R_0))$  and let $\chi$ be in $ C_0^\infty(B(0,R_0))$. Then we can apply corollary \ref{cor: cor1} to $\chi u$:

\begin{align}
    &\int_{B(0,R_0)} e^{2\tilde{\phi}(x)\,\tilde{\omega}_V(h)^{1/2}/ h^{4/3}} (|\chi u(x)|^2 +|h\nabla_x \chi u(x)|^2)dx\nn\\
    &\leq C \frac{1}{\tilde{\omega}_V(h)^{1/2}h^{2/3}}\int_{B(0,R_0)} e^{2\tilde{\phi}(x)\,\tilde{\omega}_V(h)^{1/2}/ h^{4/3}}|(-h^2\Delta_x + V_{per}- E)\chi u|^2dx
\end{align}

So if we take $\chi=1$ on $B(0,\tilde{R}_0)$ with $\tilde{R}_0<R_0$ and $\chi\geq 0$, then:

\begin{align}
    &\int_{B(0,\tilde{R}_0)} e^{2\tilde{\phi}(x)\,\tilde{\omega}_V(h)^{1/2}/ h^{4/3}} ( |u(x)|^2 +|h\nabla_x u(x)|^2)dx\nn\\
    &\leq C \frac{1}{\tilde{\omega}_V(h)^{1/2}h^{2/3}}\int_{B(0,R_0)} e^{2\tilde{\phi}(x)\,\tilde{\omega}_V(h)^{1/2}/ h^{4/3}}|(-h^2\Delta_x + V_{per}- E) u|^2dx\nn\\
    &\quad + C \frac{1}{\tilde{\omega}_V(h)^{1/2}h^{2/3}}\int_{B(0,R_0)} e^{2\tilde{\phi}(x)\,\tilde{\omega}_V(h)^{1/2}/ h^{4/3}}|[-h^2\Delta_x ,\chi] u|^2dx
\end{align}

The support of $\nabla\chi$ is in $A(0,\tilde{R}_0,R_0)$ so the support of $[-h^2\Delta_x ,\chi] u$ is also in $A(0,\tilde{R}_0,R_0)$. Moreover there is $\beta>0$ such that:

\begin{equation}
|[-h^2\Delta_x ,\chi] u|^2 \leq  h^2 \beta(|u|^2 +|h\nabla_x u|^2)
\end{equation} 

Thus we have

\begin{align}
    &\int_{B(0,\tilde{R}_0)} e^{2\tilde{\phi}(x)\,\tilde{\omega}_V(h)^{1/2}/ h^{4/3}} (|u(x)|^2 +|h\nabla_x u(x)|^2)dx\nn\\
    &\leq \tilde{C} \frac{1}{\tilde{\omega}_V(h)^{1/2}h^{2/3}}\int_{B(0,R_0)} e^{2\tilde{\phi}(x)\,\tilde{\omega}_V(h)^{1/2}/ h^{4/3}}|(-h^2\Delta_x + V_{per}- E)u|^2dx\nn\\
    &\quad +\tilde{C} \frac{h^{4/3}}{\tilde{\omega}_V(h)^{1/2}}\int_{A(0,\tilde{R}_0,R_0)} e^{2\tilde{\phi}(x)\,\tilde{\omega}_V(h)^{1/2}/ h^{4/3}} (|u(x)|^2 +|h\nabla_x u(x)|^2)dx
\end{align}
with $\tilde{C}$. Hence the first result.\\
For the second case, thanks to the corollary \ref{cor: cor1} we know that for $u \in C_0^\infty(B(0,R_0)\setminus B(0,R/2))$, we have for $h$ small enough:

\begin{align}
    &\int_{B(0,R_0)} e^{2\tilde{\phi}(x)\,\tilde{\omega}_V(h)^{1/2}/ h^{4/3}} (|u(x)|^2 +|h\nabla_x u(x)|^2)dx\nn\\
    &\leq C \frac{1}{\tilde{\omega}_V(h)^{1/2}h^{2/3}}\int_{B(0,R_0)} e^{2\tilde{\phi}(x)\,\tilde{\omega}_V(h)^{1/2}/ h^{4/3}}|(-h^2\Delta_x + V_{per}- E)u|^2dx
\end{align}

Let's replace $u$ by a smooth function and let $\chi$ be in $ C_0^\infty(B(0,R_0)\setminus B(0,R/2))$. Then we can apply corollary \ref{cor: cor1} to $\chi u$:

\begin{align}
    &\int_{B(0,R_0)} e^{2\tilde{\phi}(x)\,\tilde{\omega}_V(h)^{1/2}/ h^{4/3}} (|\chi u(x)|^2 +|h\nabla_x \chi u(x)|^2)dx\nn\\
    &\leq C \frac{1}{\tilde{\omega}_V(h)^{1/2}h^{2/3}}\int_{B(0,R_0)} e^{2\tilde{\phi}(x)\,\tilde{\omega}_V(h)^{1/2}/ h^{4/3}}|(-h^2\Delta_x + V_{per}- E)\chi u|^2dx
\end{align}

Suppose that $\chi=1$ on $A(0,R_1,R_2)$ with $R/2<R_1<R<R_2<R_0$ and $\chi\geq 0$. Then:

\begin{align}
    &\int_{A(0,R_1,R_2)} e^{2\tilde{\phi}(x)\,\tilde{\omega}_V(h)^{1/2}/ h^{4/3}} ( |u(x)|^2 +|h\nabla_x u(x)|^2)dx\nn\\
    &\leq C \frac{1}{\tilde{\omega}_V(h)^{1/2}h^{2/3}}\int_{B(0,R_0)} \chi e^{2\tilde{\phi}(x)\,\tilde{\omega}_V(h)^{1/2}/ h^{4/3}}|(-h^2\Delta_x + V- E) u|^2dx\nn\\
    &\quad + C \frac{1}{\tilde{\omega}_V(h)^{1/2}h^{2/3}}\int_{B(0,R_0)} e^{2\tilde{\phi}(x)\,\tilde{\omega}_V(h)^{1/2}/ h^{4/3}}|[-h^2\Delta_x ,\chi] u|^2dx
\end{align}

The support of $\nabla\chi$ is in $A(0,R_2,R_0)\cup A(0,r/2,R_1)$ so the support of $[-h^2\Delta_x ,\chi] u$ is also in $A(0,R_2,R_0)\cup A(0,R/2,R_1)$. Moreover there is $\beta>0$

\begin{equation}
|[-h^2\Delta_x ,\chi] u|^2 \leq  h^2 \beta(|u|^2 +|h\nabla_x u|^2)
\end{equation} 

Thus we have

\begin{align}
    &\int_{A(0,R_1,R_2)} e^{2\tilde{\phi}(x)\,\tilde{\omega}_V(h)^{1/2}/ h^{4/3}} (|u(x)|^2 +|h\nabla_x u(x)|^2)dx\nn\\
    &\leq \tilde{C} \frac{1}{\tilde{\omega}_V(h)^{1/2}h^{2/3}}\int_{B(0,R_0)} \chi e^{2\tilde{\phi}(x)\,\tilde{\omega}_V(h)^{1/2}/ h^{4/3}}|(-h^2\Delta_x + V- E)u|^2dx\nn\\
    &\quad +\tilde{C} \frac{h^{4/3}}{\tilde{\omega}_V(h)^{1/2}}\int_{A(0,R_2,R_0)} e^{2\tilde{\phi}(x)\,\tilde{\omega}_V(h)^{1/2}/ h^{4/3}} (|u(x)|^2 +|h\nabla_x u(x)|^2)dx\nn\\
    &\quad +\tilde{C} \frac{h^{4/3}}{\tilde{\omega}_V(h)^{1/2}}\int_{A(0,r/2,R_1)} e^{2\tilde{\phi}(x)\,\tilde{\omega}_V(h)^{1/2}/ h^{4/3}} (|u(x)|^2 +|h\nabla_x u(x)|^2)dx
\end{align}
with $\tilde{C}>0$. On $A(0,R_1,R_0)$, $\tilde{\phi} \leq \tilde{\phi}(R_1)=:K$. Eventually, we obtain:

\begin{align}
    &\int_{A(0,R_1,R_2)} e^{2\tilde{\phi}(x)\,\tilde{\omega}_V(h)^{1/2}/ h^{4/3}} (|u(x)|^2 +|h\nabla_x u(x)|^2)dx\nn\\
    &\leq \tilde{C} \frac{1}{\tilde{\omega}_V(h)^{1/2}h^{2/3}}\int_{B(0,R_0)} e^{2K\,\tilde{\omega}_V(h)^{1/2}/ h^{4/3}}|(-h^2\Delta_x + V- E)u|^2dx\nn\\
    &\quad + \tilde{C} \frac{h^{4/3}}{\tilde{\omega}_V(h)^{1/2}}\int_{B_r} e^{2K\,\tilde{\omega}_V(h)^{1/2}/ h^{4/3}} (|u(x)|^2 +|h\nabla_x u(x)|^2)dx\nn\\
    &\quad +\tilde{C} \frac{h^{4/3}}{\tilde{\omega}_V(h)^{1/2}}\int_{A(0,R_2,R_0)} e^{2\tilde{\phi}(x)\,\tilde{\omega}_V(h)^{1/2}/ h^{4/3}} (|u(x)|^2 +|h\nabla_x u(x)|^2)dx
\end{align}
Hence the second result.

\end{proof}
\subsection{Conclusion}
So from the beginning, we work on a closed Riemannian surface $M$ and we look at the equation:

\begin{equation}
(-h^2\Delta +V-E)u=f \quad \mbox{on M}
\end{equation}

where  $V\in L^\infty(M,\R)$, $u\in H^2(M,\R)$ and $f\in L^2(M,\R)$. Let $U\subset M$ an open subset.
In the subsection \ref{subsect: Uniformization}, we have seen that we can reduce or study to only two cases on ball $B(0,R_0)\subset \R^2$: with centre $0$ and radius $R_0>0$:
\begin{itemize}
	\item $U$ is replace by a ball $B(0,R)$ with radius $0<R<R_0$ .
	\item $U$ is replace by a ring $A(0,\tilde{R_0},R_0)$ with $0<\tilde{R}_0<R_0$.
\end{itemize}
$u,\,V,\,f$ are replaced by "periodic" function  $u_{per},\, V_{per},\, f_{per}$ in the sense of transformation see in the subsection \ref{subsect: Uniformization}:
$$(-h^2\Delta +V_{per}-E)u_{per}=f_{per} \quad \mbox{on } B(0,R_0).$$

For the first case, we use the second statement of the corollary \ref{cor: cor2}:

\begin{align}
   &\int_{A(0,R_1,R_2)} e^{2\tilde{\phi}(x)\,\tilde{\omega}_V(h)^{1/2}/ h^{4/3}} (|u_{per}(x)|^2 +|h\nabla_x u_{per}(x)|^2)dx\nn\\
   &\leq C \frac{1}{\tilde{\omega}_V(h)^{1/2}h^{2/3}}\int_{B(0,R_0)} e^{2K\,\tilde{\omega}_V(h)^{1/2}/ h^{4/3}}|f_{per}(x)|^2dx\nn\\
   &\quad + C \frac{h^{4/3}}{\tilde{\omega}_V(h)^{1/2}}\int_{B(0,R)} e^{2K\,\tilde{\omega}_V(h)^{1/2}/ h^{4/3}} (|u_{per}(x)|^2 +|h\nabla_x u_{per}(x)|^2)dx\nn\\
   &\quad +C \frac{h^{4/3}}{\tilde{\omega}_V(h)^{1/2}}\int_{A(0,R_2,R_0)} e^{2\tilde{\phi}(x)\,\tilde{\omega}_V(h)^{1/2}/ h^{4/3}} (|u_{per}(x)|^2 +|h\nabla_x u_{per}(x)|^2)dx
\end{align}
 for $h$ small enough, with $\tilde{\phi}(x)$ a smooth positive and radially decreasing function, $C>0$, $K>\tilde{\phi}$ on $A(0,R_1,R_0)$ a ring centre on $0$ with radius $R/2<R_1<R<R_2<R_0$ and
with

\begin{equation}
\tilde{\omega}_V(h)= \frac{1}{h^{4/3}}\sup_{x_0\in B(0,R_0)}\sup_{x\in B(x_0,h^{2/3}\kappa)}|V(x)-V(x_0)|^{1/2}
\end{equation}

Thanks to the "periodization" and the fact that $\tilde{\phi}(x)$ is decreasing:

\begin{align}
	\int_{A(0,R_2,R_0)} e^{2\tilde{\phi}(x)\,\tilde{\omega}_V(h)^{1/2}/ h^{4/3}} (|u_{per}(x)|^2 +|h\nabla_x u_{per}(x)|^2)dx\nn\\
	= \mathcal{O}\left(\int_{B(0,R_2)} e^{2\tilde{\phi}(x)\,\tilde{\omega}_V(h)^{1/2}/ h^{4/3}} (|u_{per}(x)|^2 +|h\nabla_x u_{per}(x)|^2)dx\right)
\end{align}
if $1<<R_2<R_0$.

So we have for $h$ small enough:
\begin{align}
    &C \frac{h^{4/3}}{\tilde{\omega}_V(h)^{1/2}}\int_{A(0,R_1,R_2)} e^{2\tilde{\phi}(x)\,\tilde{\omega}_V(h)^{1/2}/ h^{4/3}} (|u_{per}(x)|^2 +|h\nabla_x u_{per}(x)|^2)dx \nn\\
    &\leq C \frac{h^{4/3}}{\tilde{\omega}_V(h)^{1/2}}\int_{B(0,R)} e^{2K\,\tilde{\omega}_V(h)^{1/2}/ h^{4/3}} (|u_{per}(x)|^2 +|h\nabla_x u_{per}(x)|^2)dx\nn\\
    &+\int_{A(0,R_1,R_2)} e^{2\tilde{\phi}(x)\,\tilde{\omega}_V(h)^{1/2}/ h^{4/3}} (|u_{per}(x)|^2 +|h\nabla_x u_{per}(x)|^2)dx
\end{align}

Thus:

\begin{align}
\int_{B(0,R_0)} (|u_{per}(x)|^2 &+|h\nabla_x u_{per}(x)|^2)dx \ \nn\\
&\leq \int_{B(0,R)} (|u_{per}(x)|^2 +|h\nabla_x u_{per}(x)|^2)dx\nn\\
&\quad + \int_{A(0,R_1,R_2)} (|u_{per}(x)|^2 +|h\nabla_x u_{per}(x)|^2)dx\nn\\
&\quad +\int_{A(0,R_2,R_0)} (|u_{per}(x)|^2 +|h\nabla_x u_{per}(x)|^2)dx\nn\\
&\leq \tilde{C} e^{\tilde{C}\,\tilde{\omega}_V(h)^{1/2}/ h^{4/3}} \int_{B(0,R)}  (|u_{per}(x)|^2 +|h\nabla_x u_{per}(x)|^2)dx\nn\\
&\quad + \tilde{C} e^{\tilde{C}\,\tilde{\omega}_V(h)^{1/2}/ h^{4/3}}\int_{B(0,R_0)} |f_{per}(x)|^2dx
\end{align}
with $\tilde{C}>0$
By periodicity, we can replace $\tilde{\omega}_V(h)$ by $\beta(h)= \frac{1}{h^{4/3}}\sup_{x_0\in M}\sup_{x\in B(x_0,h^{2/3}\kappa)}|V(x)-V(x_0)|^{1/2}$. By the subsection \ref{subsect: Uniformization}, we can reduce the integral on $B(0,R_0)$ to the integral on $M$ and replace $u_{per}$ and $f_{per}$:

\begin{align}
\int_{M}|u|^2+|h\nabla u|^2&\leq \tilde{C} e^{\tilde{C}\,\beta(h)^{1/2}/ h^{4/3}}\left(\int_{M} |f|^2+\int_{U} (|u|^2 +|h\nabla u|^2)\right)
\end{align}
This conclude the first case.

For the second one, we use here the first point of corollary \ref{cor: cor2}:

\begin{align}
       &\int_{B(0,\tilde{R}_0)} e^{2\tilde{\phi}(x)\,\tilde{\omega}_V(h)^{1/2}/ h^{4/3}} (|u_{per}(x)|^2 +|h\nabla_x u_{per}(x)|^2)dx\nn\\
    &\leq C \frac{1}{\tilde{\omega}_V(h)^{1/2}h^{2/3}}\int_{B(0,R_0)} e^{2\tilde{\phi}(x)\,\tilde{\omega}_V(h)^{1/2}/ h^{4/3}}|f_{per}(x)|^2dx\nn\\
    &\quad +C \frac{h^{4/3}}{\tilde{\omega}_V(h)^{1/2}}\int_{A(0,\tilde{R}_0,R_0)} e^{2\tilde{\phi}(x)\,\tilde{\omega}_V(h)^{1/2}/ h^{4/3}} (|u_{per}(x)|^2 +|h\nabla_x u_{per}(x)|^2)dx
\end{align}

for $h$ small enough, with $\tilde{\phi}(x)$ a smooth positive function, $C>0$, $A(0,\tilde{R}_0,R_0)$ is a ring centre on $0$ with radius $0<\tilde{R}_0<R_0$ and with

\begin{equation}
\tilde{\omega}_V(h)= \frac{1}{h^{4/3}}\sup_{x_0\in B(0,R_0)}\sup_{x\in B(x_0,h^{2/3}\kappa)}|V(x)-V(x_0)|^{1/2}.
\end{equation}

So:

\begin{align}
	\int_{B(0,R_0)}|u_{per}(x)|^2&+|h\nabla_x u_{per}(x)|^2\nn\\
	&=\int_{B(0,\tilde{R}_0)}|u_{per}(x)|^2+|h\nabla_x u_{per}(x)|^2+\int_{A(0,\tilde{R}_0,R_0)}|u_{per}(x)|^2+|h\nabla_x u_{per}(x)|^2\nn\\
	&\leq (C \frac{h^{4/3}}{\tilde{\omega}_V(h)^{1/2}}+1)\int_{A(0,\tilde{R}_0,R_0)} e^{2\tilde{\phi}(x)\,\tilde{\omega}_V(h)^{1/2}/ h^{4/3}} (|u_{per}(x)|^2 +|h\nabla_x u_{per}(x)|^2)dx\nn\\
	&\quad +C \frac{1}{\tilde{\omega}_V(h)^{1/2}h^{2/3}}\int_{B(0,R_0)} e^{2\tilde{\phi}(x)\,\tilde{\omega}_V(h)^{1/2}/ h^{4/3}}|f_{per}(x)|^2dx\nn\\
	& \leq \tilde{C} e^{\tilde{C}\,\tilde{\omega}_V(h)^{1/2}/ h^{4/3}}\int_{A(0,\tilde{R_0},R_0)} (|u_{per}(x)|^2 +|h\nabla_x u_{per}(x)|^2)dx\nn\\
	& \quad+\tilde{C} e^{\tilde{C}\,\tilde{\omega}_V(h)^{1/2}/ h^{4/3}}\int_{B(0,R_0)}|f_{per}(x)|^2dx
\end{align}
with $\tilde{C}>0$.
By periodicity, we can replace $\tilde{\omega}_V(h)$ by $\beta(h)= \frac{1}{h^{4/3}}\sup_{x_0\in M}\sup_{x\in B(x_0,h^{2/3}\kappa)}|V(x)-V(x_0)|^{1/2}$.
Then by the subsection \ref{subsect: Uniformization}, we can reduce the integral on $B(0,R_0)$ to the integral on $M$ and replace $u_{per}$ and $f_{per}$:

\begin{align}
\int_{M}|u|^2+|h\nabla u|^2&\leq \tilde{C} e^{\tilde{C}\,\beta(h)^{1/2}/ h^{4/3}}\left(\int_{M}|f|+\int_{U} (|u|^2 +|h\nabla u|^2)\right)
\end{align}
Which conclude the second case.

\bibliographystyle{elsarticle-harv} 
\typeout{}
\bibliography{refs}

\end{document}